\theoremstyle{thmstyleone}%
\newtheorem{theorem}{Theorem}%  meant for continuous numbers
\newtheorem{proposition}[theorem]{Proposition}% 
\newtheorem{corollary}[theorem]{Corollary}
\newtheorem{lemma}[theorem]{Lemma}
\newtheorem{Remark}[theorem]{Remark}
\theoremstyle{thmstyletwo}%
\newtheorem{example}{Example}%
\theoremstyle{thmstylethree}%
\newtheorem{definition}{Definition}%
\begin{document}

\title[Excess of Fusion Frames: A Comprehensive Approach]{Excess of Fusion Frames: A Comprehensive Approach}

%%=============================================================%%
%% GivenName	-> \fnm{Joergen W.}
%% Particle	-> \spfx{van der} -> surname prefix
%% FamilyName	-> \sur{Ploeg}
%% Suffix	-> \sfx{IV}
%% \author*[1,2]{\fnm{Joergen W.} \spfx{van der} \sur{Ploeg} 
%%  \sfx{IV}}\email{iauthor@gmail.com}
%%=============================================================%%

\author[1]{\fnm{Ehsan} \sur{Ameli}}\email{eh.ameli@hsu.ac.ir}

\author*[1]{\fnm{Ali Akbar} \sur{Arefijamaal}}\email{arefijamaal@hsu.ac.ir}
\equalcont{These authors contributed equally to this work.}

\author[2,3]{\fnm{Fahimeh} \sur{Arabyani Neyshaburi}}\email{fahimeh.arabyani@gmail.com}
\equalcont{These authors contributed equally to this work.}

\affil[1]{\orgdiv{Department of Mathematics and Computer Sciences}, \orgname{Hakim Sabzevari University}, \state{Sabzevar}, \country{Iran}}

\affil[2]{\orgdiv{Department of Mathematical Sciences}, \orgname{Ferdowsi University of Mashhad}, \city{Mashhad}, \country{Iran}}

\affil[3]{\orgdiv{Department of Mathematics}, \orgname{University of Neyshabur}, \city{Neyshabur}, \postcode{P.O.Box 91136-899}, \country{Iran}}

%%==================================%%
%% Sample for unstructured abstract %%
%%==================================%%

\abstract{Computing the excess as a method of measuring the redundancy of frames was recently introduced to address certain issues in frame theory. In this paper, the concept of excess for fusion frames is studied. Then, several explicit methods are provided to compute the excess of fusion frames and their $Q$-duals. In particular, some upper bounds for the excess of $Q$-dual fusion frames are established. It turns out that, unlike ordinary frames, for every $n \in \Bbb{N}$ we can provide a fusion frame with its $Q$-dual whose the difference of their excess is $n$. Furthermore, the connection between the excess of fusion frames and their orthogonal complement is completely characterized. Finally, several examples are exhibited to confirm the obtained results.}

\keywords{Frames, fusion frames, $Q$-dual fusion frames, redundancy, excess}

%%\pacs[JEL Classification]{D8, H51}

\pacs[MSC Classification]{Primary 42C15; Secondary 15A12}

\maketitle

\section{Introduction}\label{sec1}

The excess carries out a crucial role in frame theory and signal processing, as it allows for a more robust representation of signals in the presence of noise and common types of interference. The excess is a rather crude way of measuring the redundancy of frames. In fact, it is defined as the greatest number of elements that can be removed from a frame in a Hilbert space and still leave a set with the same closed linear span. Recently, accurate methods have been provided for computing the excess of frames and g-frames \cite{Deficit, Besselian, Nga}. 

Fusion frame theory is a generalization of ordinary frames in separable Hilbert spaces, introduced by Casazza and Kutyniok in \cite{frame of subspace}. Fusion frames play an important role in many applications in mathematical analysis and engineering, including coding theory, filter bank theory, signal and image processing and wireless communications and many other fields \cite{existence, frame of subspace, Excess 1, Nga, Rahimi}. In this regard, understanding the redundancy of fusion frames is a fundamental issue that has an impact on many applications. Many concepts of frame theory have been generalized to the fusion frame setting \cite{Arabyani dual, A.A.SH, Nga,  Rahimi, mitra sh.}. In most of those results, the calculation of the excess and the construction of Riesz bases are of key importance. In this respect, having a notion of the excess of fusion frames allows us to generate fusion Riesz bases. Motivated by this point of view and the recent work of Balan et al. \cite{Deficit} in the direction of excess of frames in Hilbert spaces, we propose an approach to determine the excess of fusion frames. We build a connection between the excess of fusion frames and their local frames, and subsequently investigate the differences between these two. This approach would be interesting to find out whether the excess of fusion frames can be defined as the removal of redundant subspaces. In \cite{Rahimi}, some approaches are presented to determine the redundancy of fusion frames. Moreover, the redundancy function and the concept of upper and lower redundancies are introduced in \cite{quantitative} as a quantitative measure for computing redundancy in ordinary frames, which is not aligned with our intuitive understanding of the excess in Hilbert spaces, as we will discuss in this paper. Hence, this survey focuses exclusively on the study of the excess of fusion frames.\color{black}

Our next objective is to provide an explicit method for computing the excess of fusion frames. Furthermore, we try to obtain the excess of $Q$-dual fusion frames which are introduced in \cite{Heineken}. One advantage of these dual fusion frames with respect to alternate (G\~{a}vru\c{t}a) dual fusion frames \cite{Gavruta} is that they can be readily obtained from the left inverses of the analysis operator of fusion frames. So, the following questions naturally arise: What is the relationship between the excess of a fusion frame and its $Q$-dual fusion frame? For a given fusion frame, is there a $Q$-dual fusion frame with the same excess? How can we establish the connection between the excess of a fusion frame and its orthogonal complement fusion frame? In this study, we provide several responses to all of these questions.

The remainder of the paper is structured as follows. In Section 2, we briefly review frames and fusion frames in Hilbert spaces. In Section 3, we take an approach to introduce the excess of fusion frames and thereby we make a connection with the excess of local frames. We then present an explicit method to compute the excess of fusion frames. Furthermore, we study the relationship between the excess of fusion frames and their orthogonal complement fusion frames and we compute the excess of a class of fusion frames. Finally, Section 4 is devoted to study the excess of $Q$-dual fusion frames and to present some methods for computing their excess. Moreover, we give several examples of $Q$-dual fusion frames and we compute their excess by applying the obtained results. Specifically, we show that, unlike ordinary frames, there are $Q$-dual fusion frames in which their excess are much more than the excess of own fusion frame.

\section{Preliminaries and Notations} 
We review the basic definitions and primary results of frames and fusion frames. Throughout this paper, we suppose that $\mathcal {H}$ is a separable Hilbert space and $\mathcal {H}_n$ an n-dimensional Hilbert space. Also, $I$ and $J$ denote countable index sets and $I_{\mathcal {H}}$ denotes the identity operator on $\mathcal {H}$. We denote the set of all bounded operators on $\mathcal {H}$ by $B(\mathcal {H})$. Moreover, we denote the range and null space of $T \in B(\mathcal {H})$ by $R(T)$ and $N(T)$, respectively. Finally, the orthogonal projection onto a closed subspace $W$ of $\mathcal {H}$ is denoted by $ \pi_{W} $.

Recall that a \textit{frame} for $\mathcal {H}$ is a sequence $\{f_{i}\}_{i \in I}$ of vectors in $\mathcal {H}$ such that there are constants $0<A\leq B<\infty$ satisfying $A \|f\|^{2}\leq \sum _{i \in I} |\langle f,f_{i}\rangle|^{2} \leq B\|f \|^{2},$ for all $f \in \mathcal{H}$. The constants $A$ and $B$ are called \textit{frame bounds}.
Those sequences which satisfy only the upper inequality are called \textit{Bessel sequences}. If $\{f_{i}\}_{i \in I}$ is a Bessel sequence, the \textit{synthesis operator} $T: \ell^{2}(I) \rightarrow \mathcal{H}$ is defined by $ T(\{c_{i}\}_{i \in I})= \sum_{i \in I} c_{i} f_{i}$. 
The adjoint operator $T^{*}:\mathcal{H} \rightarrow \ell^{2}(I)$, so-called \textit{analysis operator} is given by $T^{*}f =\{\langle f , f_{i}\rangle\}_{i \in I}$. Moreover, the \textit{frame operator} of $\{f_{i}\}_{i \in I}$, is defined by 
$Sf=TT^{*}f=\sum _{i \in I}\langle f,f_{i}\rangle f_{i}$, for all $f \in \mathcal{H}$. That is a positive, self-adjoint as well as invertible operator \cite{Chr08} provided that $\{f_{i}\}_{i \in I}$ is a frame. Hence, we obtain
\begin{equation*}
f=S^{-1}Sf=\sum _{i \in I}\left\langle  f,S^{-1}f_{i}\right\rangle  f_{i}, \quad(f \in \mathcal{H}).
\end{equation*}
A Bessel sequence $\{g_{i}\}_{i \in I}$ is called an \textit{alternate dual} of $\{f_{i}\}_{i \in I}$ if $\sum_{i \in I}\langle f,f_{i}\rangle g_{i}=f,$ for all $f \in \mathcal{H}$. It is shown that for every dual frame $\{g_{i}\}_{i \in I},$ there exists a Bessel sequence $\{u_{i}\}_{i \in I}$ such that \cite{Zekaee} $g_{i}=S^{-1}f_{i}+u_{i}~(i \in I),$ where $\sum_{i \in I}\langle f,f_{i}\rangle u_{i}=0$ for all $f \in \mathcal{H}$.

Frames which are not bases are overcomplete, i.e. there exist proper subsets of the frame which are frame as well. The \textit{excess} of the frame is the greatest integer n such that n elements can be removed from the frame and still have a complete set, or $\infty$ if there is no upper bound to the number of elements that can be omitted. By \cite[Lemma 4.1]{Deficit}, the excess of a frame $\mathcal {F}$ is connected to the dimension of the kernel of $T_{\mathcal {F}}$. In fact, $e(\mathcal {F})=\textnormal{dim}N(T_{\mathcal {F}})$. Also, dual frames have the same excess \cite{Bakic}. The following proposition provides a method to compute the excess of frames.
\begin{proposition}\cite{Deficit}\label{balan deficit}
Let $\mathcal {F}=\{f_{i}\}_{i \in I}$ be a frame in a Hilbert space $\mathcal {H}$ with the canonical dual frame $\mathcal {\widetilde{F}}=\{\tilde{f_{i}}\}_{i \in I}$. Then the excess of $\mathcal {F}$ is
\begin{equation*}
e(\mathcal {F})=\sum_{i \in I}\big(1-\big\langle f_{i},\tilde{f_{i}}\big\rangle\big).
\end{equation*}
\end{proposition}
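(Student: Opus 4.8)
The plan is to reduce the identity to the relation $e(\mathcal{F})=\dim N(T)$ recalled above (from \cite[Lemma~4.1]{Deficit}), where $T=T_{\mathcal{F}}\colon\ell^{2}(I)\to\mathcal{H}$ is the synthesis operator, and then to read off $\dim N(T)$ as the trace of the orthogonal projection of $\ell^{2}(I)$ onto $N(T)$, evaluated against the canonical orthonormal basis $\{e_i\}_{i\in I}$ (for which $Te_i=f_i$). First I would record that, since $\widetilde{\mathcal{F}}$ is the canonical dual, $\tilde f_i=S^{-1}f_i$, and hence $\langle f_i,\tilde f_i\rangle=\langle f_i,S^{-1}f_i\rangle=\|S^{-1/2}f_i\|^{2}$ is a non-negative real number; as will be seen below it is in fact at most $1$, so each summand $1-\langle f_i,\tilde f_i\rangle$ is non-negative and the right-hand side is a well-defined element of $[0,\infty]$.

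\textbf{Main step: identifying the projection onto $N(T)$.} Put $P:=T^{*}S^{-1}T\in B(\ell^{2}(I))$. Since $S^{-1}$ is positive and self-adjoint, $P$ is self-adjoint, and using $TT^{*}=S$ one gets $P^{2}=T^{*}S^{-1}(TT^{*})S^{-1}T=T^{*}S^{-1}SS^{-1}T=T^{*}S^{-1}T=P$, so $P$ is an orthogonal projection. Moreover $R(P)\subseteq R(T^{*})$, while for $\varphi=T^{*}f$ we have $P\varphi=T^{*}S^{-1}TT^{*}f=T^{*}f=\varphi$; since $T$ is surjective (because $S=TT^{*}$ is invertible), $R(T^{*})$ is closed, so $R(P)=R(T^{*})=N(T)^{\perp}$. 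Therefore $Q:=I_{\ell^{2}(I)}-P$ is the orthogonal projection of $\ell^{2}(I)$ onto $N(T)$.

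It remains to compute $\dim N(T)$ as the trace of $Q$. For any orthogonal projection $Q$ onto a closed subspace $M\subseteq\ell^{2}(I)$, picking an orthonormal basis $\{u_j\}$ of $M$ and expanding $\|Qe_i\|^{2}=\sum_j|\langle Qe_i,u_j\rangle|^{2}=\sum_j|\langle e_i,u_j\rangle|^{2}$, an interchange of the two non-negative sums gives $\sum_{i\in I}\langle Qe_i,e_i\rangle=\sum_{i\in I}\|Qe_i\|^{2}=\sum_j\sum_{i\in I}|\langle e_i,u_j\rangle|^{2}=\sum_j\|u_j\|^{2}=\dim M$, with the convention that both sides equal $\infty$ when $M$ is infinite-dimensional. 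Applying this to $Q=I_{\ell^{2}(I)}-P$ and $M=N(T)$, and using $\langle Pe_i,e_i\rangle=\langle S^{-1}Te_i,Te_i\rangle=\langle S^{-1}f_i,f_i\rangle=\langle f_i,\tilde f_i\rangle$ (the last equality because $\langle S^{-1}f_i,f_i\rangle$ is real), we obtain
\[
e(\mathcal{F}) \;=\; \dim N(T) \;=\; \sum_{i\in I}\langle Qe_i,e_i\rangle \;=\; \sum_{i\in I}\big(1-\langle Pe_i,e_i\rangle\big) \;=\; \sum_{i\in I}\big(1-\langle f_i,\tilde f_i\rangle\big),
\]
which is the asserted formula; note that $0\le\langle Pe_i,e_i\rangle\le\|P\|=1$ confirms the non-negativity of the summands invoked earlier. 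The only genuine obstacle is this last piece of bookkeeping in the infinite-dimensional setting: making the rearrangement of sums rigorous (legitimate by Tonelli, since all terms are non-negative) and matching the value $\infty$ on the two sides, i.e.\ the case $\dim N(T)=\infty$ (equivalently $e(\mathcal{F})=\infty$).
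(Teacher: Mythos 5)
Your proof is correct and takes essentially the same route the paper itself uses when it generalizes this statement to fusion frames in Theorem~\ref{th}: identify $I-T^{*}S^{-1}T$ as the orthogonal projection onto $N(T_{\mathcal F})$ and evaluate its trace against the canonical orthonormal basis of $\ell^{2}(I)$. The proposition is only cited in the paper, but your argument (including the careful Tonelli justification in the infinite-excess case) matches the standard Balan--Casazza--Heil--Landau proof and the paper's own projection-trace computation in spirit and in detail.
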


For more details on the frame theory we refer the reader to \cite{Cas00, Chr08, Duffin, Besselian}. During the last decade, fusion frame theory has been a growing area of research that plays an important role in many applications. Consider a family of closed subspaces $\{W_{i}\}_{i \in I}$ of $\mathcal {H}$ and $ \{\omega_{i}\}_{i \in I} $ as a family of weights, i.e.  $\omega_{i}>0 $, $(i \in I)$. Then $ \textit{W}=\{(W_{i},\omega_{i})\}_{i \in I} $ is called a \textit{fusion frame} \cite{frame of subspace} for $\mathcal {H}$ if there exist constants $0<A\leq B<\infty$ such that 
\begin{equation}\label{fusion frame def}
A \Vert f\Vert^{2}\leq \sum _{i \in I} \omega_{i}^2\Vert \pi_{W_{i}}f \Vert ^{2} \leq B\Vert f \Vert ^{2}, \quad(f \in \mathcal{H}).
\end{equation}
The constants $A$ and $B$ are called the \textit{fusion frame bounds}. If we only have the upper bound in \eqref{fusion frame def}, we call $ W $ is a \textit{fusion Bessel sequence}. The family $\{W_{i}\}_{i \in I}$ is called \textit{A-tight} fusion frame if $ A=B $, and \textit{Parseval} if $ A=B=1 $. If $ \omega_{i}=\omega $ for all $ i\in I $, then $ W $ is called \textit{$ \omega $-uniform} fusion frame and if $\textnormal{dim}W_{i}=n$ for all $ i\in I$, then $W$ is called \textit{n-equi-dimensional} fusion frame. A family of closed subspaces $\{W_{i}\}_{i \in I} $ is said to be a \textit{fusion orthonormal basis} when $\mathcal {H}$ is the orthogonal sum of the subspaces $ W_{i}$ and it is a \textit{Riesz decomposition} of $\mathcal {H}$, if for every $ f \in \mathcal{H} $ there is a unique choice of $ f_{i}\in W_{i} $ such that $ f=\sum _{i \in I} f_{i} $. A fusion frame is said to be \textit{exact}, if it ceases to be a fusion frame whenever anyone of its element is deleted. A family of closed subspaces $\{W_{i}\}_{i \in I} $ is called a \textit{fusion Riesz basis} whenever it is complete for $\mathcal {H}$ and there exist positive constants $C$ and $D$ such that for every finite subset $ J\subset I $ and arbitrary vector $ f_{j}\in W_{j}~ (j \in J)$, we have
\begin{equation*}\label{fusion Riesz basis}
C \sum _{j \in J}\Vert f_{j} \Vert ^{2} \leq \bigg\Vert \sum _{j \in J} \omega_{j} f_{j}\bigg\Vert ^2 \leq D\sum _{j \in J}\Vert f_{j} \Vert ^{2}.
\end{equation*}
Recall that for each sequence $\{W_{i}\}_{i \in I}$ of closed subspaces in $\mathcal {H}$, the space
\begin{equation*}
\left( \sum_{i \in I}\bigoplus W_{i}\right)_{\ell ^2}=\left\lbrace \{f_{i}\}_{i \in I}: f_{i}\in W_{i}~,~ \sum_{i \in I}\Vert f_{i} \Vert ^{2}< \infty \right\rbrace , 
\end{equation*}
with the inner product
\begin{equation*}
\big\langle  \{f_{i}\}_{i \in I} , \{g_{i}\}_{i \in I} \big\rangle  = \sum_{i \in I}\langle f_{i} , g_{i}\rangle ,
\end{equation*}
constitutes a Hilbert space. Henceforth, for the sake of brevity, we write $\bigoplus_{i \in I} W_{i}$ instead of $\left( \sum_{i \in I}\bigoplus W_{i}\right)_{\ell ^2}$. For a Bessel sequence $W=\{(W_{i},\omega_{i})\}_{i \in I}$, the \textit{synthesis operator} \hbox{$ T_{W}: \bigoplus_{i \in I} W_{i} \rightarrow \mathcal {H}$} is defined by 
\begin{equation*}
T_{W}(\{f_{i}\}_{i \in I})=\sum_{i \in I}\omega_{i}f_{i}, \quad \{f_{i}\}_{i \in I} \in \bigoplus_{i \in I} W_{i}.
\end{equation*}
Its adjoint operator $ T^{*}_{W}: \mathcal {H} \rightarrow \bigoplus_{i \in I} W_{i}$, which is called the \textit{analysis operator}, is given by
$ T^{*}_{W}f=\{\omega_{i}\pi_{W_{i}}f\}_{i \in I}$ for all $f \in \mathcal {H}$. If $W=\{(W_{i},\omega_{i})\}_{i \in I}$ is a fusion frame, the \textit{fusion frame operator} $ S_{W}:\mathcal {H} \rightarrow \mathcal {H}$ defined by
\begin{equation*}
S_{W}f=T_{W}T^{*}_{W}f=\sum_{i \in I}\omega_{i}^{2}\pi_{W_{i}}f,
\end{equation*}
is positive, self-adjoint as well as invertible. Thus we have the following reconstruction formula \cite{frame of subspace}:
\begin{equation*}
f=\sum_{i \in I}\omega_{i}^{2}S_{W}^{-1}\pi_{W_{i}}f, \quad(f \in \mathcal {H}).
\end{equation*}
In \cite{frame of subspace}, it has been proved that $W=\{(W_{i},\omega_{i})\}_{i \in I} $ is a Parseval fusion frame if and only if $S_{W}=I_{\mathcal {H}}$. The family $\widetilde{W}:=\left\lbrace (S_{W}^{-1}W_{i},\omega_{i})\right\rbrace _{i \in I}$, which is also a fusion frame, is called the \textit{canonical dual} of $W$. Generally, a Bessel sequence  $\{(V_{i},\upsilon_{i})\}_{i \in I}$ is called an \textit{alternate (G\~{a}vru\c{t}a) dual} of $W$, whenever
\begin{equation*}
 f=\sum_{i \in I}\omega_{i}\upsilon_{i}\pi_{V_{i}}S_{W}^{-1}\pi_{W_{i}}f,\quad(f \in \mathcal {H}).
\end{equation*}
Let $W=\{(W_{i},\omega_{i})\}_{i \in I} $ be a fusion frame for $\mathcal {H}$. A fusion Bessel sequence $V=\{(V_{i},\upsilon_{i})\}_{i \in I}$ is a dual of $W$ if and only if \cite{Osgooei}
\begin{equation*}
T_{V}\varphi_{VW} T_{W}^{*}=I_{\mathcal {H}},
\end{equation*}
where the bounded operator $\varphi_{VW}:\bigoplus_{i \in I} W_{i}\rightarrow \bigoplus_{i \in I} V_{i} $ is given by
\begin{equation*}
\varphi_{VW}\big(\{f_{i}\}_{i \in I}\big)=\left\lbrace \pi_{V_{i}}S_{W}^{-1}f_{i}\right\rbrace _{i \in I} .
\end{equation*}

We conclude this section by presenting some results that will be useful in the subsequent sections of this paper.
\begin{theorem}\label{Local theorem}
\cite{frame of subspace} Let $\{W_{i}\}_{i \in I}$ be a family of closed subspaces of $\mathcal {H}$, $ \omega _{i} >0 $ and $\{f_{i,j}\}_{j \in J_{i}}$ be a frame (Riesz basis) for $W_{i}$ with frame bounds $A_{i}$ and $B_{i}$ such that
\begin{equation*}
0 < A=\textnormal{inf}_{i \in I} A_{i}\leq \textnormal{sup}_{i \in I} B_{i}=B < \infty .
\end{equation*}
Then the following conditions are equivalent:
\begin{itemize}
\item[(i)] $W=\{(W_{i},\omega_{i})\}_{i \in I}$ is a fusion frame (fusion Riesz basis) for $\mathcal {H}$.
\item[(ii)] $ \mathcal{F}=\{\omega_{i}f_{i,j}\}_{i \in I, j \in J} $ is a frame (Riesz basis) for $\mathcal {H}$.
\end{itemize}
\end{theorem}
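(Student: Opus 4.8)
The plan is to deduce both equivalences from one two-sided estimate that traps the ``frame sum'' of $\mathcal{F}$ between fixed multiples of the ``fusion frame sum'' of $W$. The starting point is local: fix $i\in I$; since every $f_{i,j}$ lies in $W_i$ we have $\langle f,f_{i,j}\rangle=\langle \pi_{W_i}f,f_{i,j}\rangle$ for all $f\in\mathcal{H}$, so applying the frame inequality of $\{f_{i,j}\}_{j\in J_i}$ to the vector $\pi_{W_i}f\in W_i$ gives
\begin{equation*}
A_i \|\pi_{W_i}f\|^2 \le \sum_{j\in J_i}\bigl|\langle f,f_{i,j}\rangle\bigr|^2 \le B_i \|\pi_{W_i}f\|^2 .
\end{equation*}
Multiplying by $\omega_i^2$, summing over $i\in I$, and using $A=\inf_i A_i$ and $B=\sup_i B_i$, I obtain
\begin{equation*}
A\sum_{i\in I}\omega_i^2\|\pi_{W_i}f\|^2 \le \sum_{i\in I}\sum_{j\in J_i}\bigl|\langle f,\omega_i f_{i,j}\rangle\bigr|^2 \le B\sum_{i\in I}\omega_i^2\|\pi_{W_i}f\|^2, \quad(f\in\mathcal{H}).
\end{equation*}
Because $0<A\le B<\infty$, the outer and middle sums are comparable with fixed constants; hence the middle sum satisfies two-sided bounds against $\|f\|^2$ if and only if $\sum_{i}\omega_i^2\|\pi_{W_i}f\|^2$ does, which is precisely the equivalence (i)$\Leftrightarrow$(ii) in the fusion frame/frame case. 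One also reads off the bounds: fusion frame bounds $C,D$ for $W$ yield frame bounds $AC,BD$ for $\mathcal{F}$, and frame bounds $C',D'$ for $\mathcal{F}$ yield fusion frame bounds $C'/B$ and $D'/A$ for $W$ (here $A>0$ is used for the upper fusion bound, $B<\infty$ for the lower one).

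For the Riesz basis statement, write $x\asymp y$ to mean that $x$ and $y$ are bounded by fixed multiples of one another. Each local Riesz basis $\{f_{i,j}\}_{j\in J_i}$ turns the coordinate map $\Theta_i\colon\ell^2(J_i)\to W_i$, $\{c_j\}_j\mapsto\sum_j c_j f_{i,j}$, into a bounded bijection with $A_i\sum_j|c_j|^2\le\|\Theta_i\{c_j\}\|^2\le B_i\sum_j|c_j|^2$; together with $A=\inf_i A_i>0$ and $B=\sup_i B_i<\infty$ this makes all the comparisons below uniform in the index $i$. I would then match the finite-sum inequalities defining a fusion Riesz basis with those defining a Riesz basis for $\mathcal{F}$. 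Given $f_j\in W_j$ for $j$ in a finite set $J$, expand $f_j=\Theta_j\{c_{j,k}\}_k$; then $\sum_k|c_{j,k}|^2\asymp\|f_j\|^2$, so
\begin{equation*}
\Bigl\|\sum_{j\in J}\omega_j f_j\Bigr\|^2 = \Bigl\|\sum_{j\in J}\sum_k\omega_j c_{j,k}f_{j,k}\Bigr\|^2 \asymp \sum_{j\in J}\sum_k|c_{j,k}|^2 \asymp \sum_{j\in J}\|f_j\|^2,
\end{equation*}
which is the fusion Riesz basis inequality for $W$. Running the same chain in the opposite order, starting from a finitely supported scalar array $\{c_{i,j}\}$ and setting $f_i=\Theta_i\{c_{i,j}\}_j$, gives $\bigl\|\sum_{i,j}\omega_i c_{i,j}f_{i,j}\bigr\|^2\asymp\sum_{i,j}|c_{i,j}|^2$, so $\mathcal{F}$ is a Riesz sequence. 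Completeness transfers in either direction because $\overline{\mathrm{span}}\{f_{i,j}:j\in J_i\}=W_i$, so the closed span of $\mathcal{F}$ equals the closed span of $\bigcup_{i\in I}W_i$; combining the Riesz sequence property with completeness gives the equivalence.

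The estimate of the first paragraph is routine. The delicate point is the convergence bookkeeping in the Riesz basis argument: the expansions $f_j=\sum_k c_{j,k}f_{j,k}$ are in general infinite series, so one cannot directly invoke the finite-sequence Riesz inequality of $\mathcal{F}$, and must first extend that inequality from finitely supported coefficient arrays to all of $\bigoplus_{i\in I}\ell^2(J_i)$ by continuity, and justify regrouping the double sum $\sum_i\sum_j$. With that done, the proof is just the chaining of local and global estimates indicated above.
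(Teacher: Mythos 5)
Your argument is correct. Note, however, that the paper offers no proof of this statement at all: it is quoted verbatim from the cited reference (Casazza--Kutyniok, \emph{Frames of subspaces}), so there is nothing internal to compare against. What you wrote is essentially the standard proof from that reference: the identity $\langle f,f_{i,j}\rangle=\langle \pi_{W_i}f,f_{i,j}\rangle$ combined with the uniform local bounds $A=\inf_i A_i>0$, $B=\sup_i B_i<\infty$ sandwiches the frame sum of $\mathcal{F}$ between $A$ and $B$ times the fusion frame sum of $W$, giving the frame equivalence with the bound bookkeeping you state; and the Riesz case follows by the uniform comparability $\|f_j\|^2\asymp\sum_k|c_{j,k}|^2$ of local coordinates, plus the transfer of completeness through $\overline{\mathrm{span}}\{f_{i,j}\}_{j\in J_i}=W_i$. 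Your flagged caveat --- extending the finite-sequence Riesz inequality of $\mathcal{F}$ to square-summable arrays by continuity before substituting the (generally infinite) expansions $f_j=\sum_k c_{j,k}f_{j,k}$ --- is exactly the right point to be careful about, and handling it as you describe closes the argument.
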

The frame $\mathcal{F}$ in the above theorem is called the \textit{local frame} of $W$.

\begin{proposition}\cite{frame of subspace, mitra sh.} \label{mitra sh.} 
Let $W=\{(W_{i},\omega_{i})\}_{i \in I} $ be a fusion frame for $\mathcal {H}$. Then the following are equivalent:
\begin{itemize}
\item[(i)] W is a fusion Riesz basis.
\item[(ii)] $S_{W}^{-1}W_{i}\perp W_{j} ~ for~all~ i , j \in I,~ i \neq j $.
\item[(iii)] $ \omega_{i}^{2}\pi_{W_{i}}S_{W}^{-1}\pi_{W_{j}}=\delta_{i,j}\pi_{W_{j}}~ for~all~ i , j \in I $.
\end{itemize}
\end{proposition}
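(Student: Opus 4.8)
The plan is to prove the cycle $(i)\Rightarrow(iii)\Rightarrow(ii)\Rightarrow(i)$. The guiding observation is that, since $W$ is a fusion frame, its synthesis operator $T_{W}\colon\bigoplus_{i\in I}W_{i}\to\mathcal{H}$ is bounded and surjective, so $W$ is a fusion Riesz basis exactly when $T_{W}$ is injective, i.e.\ a topological isomorphism; the Riesz bounds $C=\|T_{W}^{-1}\|^{-2}$ and $D=\|T_{W}\|^{2}$ on finite subfamilies then come for free, and completeness is already part of being a fusion frame.

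For $(i)\Rightarrow(iii)$, I would begin with the identity $f=S_{W}S_{W}^{-1}f=T_{W}T_{W}^{*}S_{W}^{-1}f=T_{W}\big(\{\omega_{i}\pi_{W_{i}}S_{W}^{-1}f\}_{i\in I}\big)$, valid for all $f\in\mathcal{H}$. Fix $k\in I$ and $g\in W_{k}$ and feed $f=\omega_{k}g$ into this formula: the element $\{\omega_{i}\omega_{k}\pi_{W_{i}}S_{W}^{-1}g\}_{i\in I}$ and the element with $g$ in the $k$-th slot and $0$ elsewhere are both sent by $T_{W}$ to $\omega_{k}g$. As $(i)$ makes $T_{W}$ injective, these two elements of $\bigoplus_{i\in I}W_{i}$ agree coordinatewise, giving $\omega_{i}\omega_{k}\pi_{W_{i}}S_{W}^{-1}g=\delta_{i,k}g$ for every $g\in W_{k}$; letting $g=\pi_{W_{k}}f$ run over $W_{k}$ this reads $\omega_{i}\omega_{k}\pi_{W_{i}}S_{W}^{-1}\pi_{W_{k}}=\delta_{i,k}\pi_{W_{k}}$, which, after cancelling the strictly positive weights off the diagonal, is precisely $(iii)$. (Alternatively, one could pick orthonormal bases $\{e_{i,j}\}_{j}$ of the $W_{i}$, use Theorem~\ref{Local theorem} to see that the local family $\mathcal{F}=\{\omega_{i}e_{i,j}\}$ is a Riesz basis with frame operator $S_{W}$, and read $(iii)$ off the biorthogonality of a Riesz basis with its canonical dual.)

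The step $(iii)\Rightarrow(ii)$ is immediate: for $i\neq j$ the relation $\pi_{W_{i}}S_{W}^{-1}\pi_{W_{j}}=0$ says $S_{W}^{-1}h\perp W_{i}$ for all $h\in W_{j}$, i.e.\ $S_{W}^{-1}W_{j}\perp W_{i}$. The real content is $(ii)\Rightarrow(i)$, which I expect to be the \emph{main obstacle}: one has to extract Riesz-basis rigidity from the bare orthogonality hypothesis. I would do this by proving uniqueness of representations. Suppose $\{f_{i}\}\in\bigoplus_{i\in I}W_{i}$ with $\sum_{i\in I}\omega_{i}f_{i}=0$. Applying the bounded operator $\pi_{W_{j}}S_{W}^{-1}$ and invoking $(ii)$ to annihilate every term with $i\neq j$ (because $S_{W}^{-1}f_{i}\in S_{W}^{-1}W_{i}\perp W_{j}$), only $\omega_{j}\pi_{W_{j}}S_{W}^{-1}f_{j}=0$ survives; then $0=\langle\pi_{W_{j}}S_{W}^{-1}f_{j},f_{j}\rangle=\langle S_{W}^{-1}f_{j},f_{j}\rangle$ since $f_{j}\in W_{j}$, and positive-definiteness of $S_{W}^{-1}$ forces $f_{j}=0$. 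Thus $N(T_{W})=\{0\}$, and together with the fusion frame hypothesis this makes $T_{W}$ an isomorphism, so $W$ is a fusion Riesz basis, closing the cycle.
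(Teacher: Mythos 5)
Your argument is correct, but there is nothing in the paper to compare it against: Proposition~\ref{mitra sh.} is imported from \cite{frame of subspace, mitra sh.} and stated without proof, so the review below only assesses your proposal on its own terms. The cycle $(i)\Rightarrow(iii)\Rightarrow(ii)\Rightarrow(i)$ closes cleanly. The hinge of the whole argument is your opening identification ``$W$ is a fusion Riesz basis $\iff$ $T_{W}$ is injective (given that $W$ is already a fusion frame)''; this is true but deserves one more line in each direction. For ``Riesz basis $\Rightarrow$ injective'' you must extend the lower inequality $C\sum_{j\in J}\Vert f_{j}\Vert^{2}\leq\Vert\sum_{j\in J}\omega_{j}f_{j}\Vert^{2}$, which the definition only asserts for finite $J$, to arbitrary elements of $\bigoplus_{i\in I}W_{i}$ by taking finite truncations and passing to the limit using boundedness of $T_{W}$; for the converse, injectivity plus surjectivity (from the frame property) gives a topological isomorphism by the open mapping theorem, which yields exactly the constants $C=\Vert T_{W}^{-1}\Vert^{-2}$, $D=\Vert T_{W}\Vert^{2}$ you quote, and completeness is inherited from the frame property. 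The remaining steps are sound: in $(i)\Rightarrow(iii)$ the two preimages of $\omega_{k}g$ under $T_{W}$ must coincide coordinatewise, and the off-diagonal identity $\omega_{i}\omega_{k}\pi_{W_{i}}S_{W}^{-1}\pi_{W_{k}}=0$ does imply $\omega_{i}^{2}\pi_{W_{i}}S_{W}^{-1}\pi_{W_{k}}=0$ since the weights are strictly positive; in $(ii)\Rightarrow(i)$ the interchange of $\pi_{W_{j}}S_{W}^{-1}$ with the convergent sum is justified by boundedness, and positivity together with invertibility of $S_{W}^{-1}$ (so $\langle S_{W}^{-1}f,f\rangle\geq\Vert S_{W}\Vert^{-1}\Vert f\Vert^{2}$) forces $f_{j}=0$. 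Your parenthetical alternative for $(i)\Rightarrow(iii)$ via Theorem~\ref{Local theorem} and biorthogonality of a Riesz basis with its canonical dual is essentially the route taken in the cited sources.
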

Throughout this note, if $\{(W_{i},\omega_{i})\}_{i \in I} $ is a fusion frame for $\mathcal {H}_{n},$ then it is traditionally assumed that $|I|<\infty$.
\section{Excess of Fusion Frames}
The excess is a way of measuring the redundancy of fusion frames, see also \cite{Excess 1}. First, we recall the concept of excess for fusion frames.
\begin{definition}\cite{Excess 1}
Let $W=\{(W_{i},\omega_{i})\}_{i \in I} $ be a fusion frame for $\mathcal {H}$ with the synthesis operator $T_{W}$. The \textit{excess} of $W$ is defined as 
\begin{equation*}
e(W)=\textnormal{dim}N(T_{W}).
\end{equation*}
\end{definition}

Suppose that $\{e_{i}\}_{i=1}^n $ is an orthonormal basis for $\mathcal {H}_{n}$, $W_{i}=\textnormal{span} \{e_{i}\}$ and $\omega_{i}=1$, $i=1,\ldots,n$. Take 
\begin{equation*}
W=\left\lbrace W_{1},W_{1},W_{2},W_{2},\ldots, W_{n},W_{n} \right\rbrace .
\end{equation*}
Obviously $\textnormal{dim}N(T_{W})=n$, while $W$ is a 2-tight fusion frame with the uniform redundancy 2, see \cite{Rahimi} for more details. However, our approach provides a precise description of which part of each subspace can be considered as redundancy.  The present paper, is concerned exclusively with the excess of fusion frames. 
\subsection{Computational view of point}
As we observed, in ordinary frames the excess of a frame is defined as the greatest number of elements that can be removed and yet leave a set with the same closed linear span. Since the subspaces of a fusion frame are not disjoint, in general, we cannot provide an analogous approach to the excess of fusion frames. However, we can connect it to the excess of local frames. In the sequel, we compute the excess of fusion frames using their local frames. We do this first through local frames obtained from Riesz bases. 
\begin{proposition}\label{excess}
Let $\mathcal{W}=\{(W_{i},\omega_{i})\}_{i \in I} $ be a fusion frame for $\mathcal {H}$ and $\mathcal{F}=\{\omega_{i}f_{i,j}\}_{i \in I, j \in J_{i}}$ its local, where $\{f_{i,j}\}_{j \in J_{i}}$ is a Riesz basis for $W_{i},$ for all $i \in I$. Then 
\begin{equation*}
e(\mathcal{W})=e(\mathcal{F}).
\end{equation*}
\end{proposition}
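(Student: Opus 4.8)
The plan is to reduce the claimed equality $e(W)=e(\mathcal F)$ to a statement about null spaces of synthesis operators, and then to exhibit an explicit isomorphism between those null spaces. By definition $e(W)=\dim N(T_W)$, and by \cite[Lemma 4.1]{Deficit} the excess of the ordinary frame $\mathcal F$ is $e(\mathcal F)=\dim N(T_{\mathcal F})$. Hence it suffices to produce a bounded bijection with bounded inverse between $\bigoplus_{i\in I}W_i$ (the domain of $T_W$) and the $\ell^2$-space on which $T_{\mathcal F}$ acts, intertwining the two synthesis operators.

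First I would construct this isomorphism locally. For each $i\in I$, since $\{f_{i,j}\}_{j\in J_i}$ is a Riesz basis for $W_i$ with bounds $A_i,B_i$, the synthesis map $\Phi_i\colon \ell^2(J_i)\to W_i$, $\Phi_i(\{c_{i,j}\}_{j\in J_i})=\sum_{j\in J_i}c_{i,j}f_{i,j}$, is a well-defined bounded bijection satisfying $A_i\sum_j|c_{i,j}|^2\le\|\Phi_i(\{c_{i,j}\}_j)\|^2\le B_i\sum_j|c_{i,j}|^2$. Assembling these componentwise, define $\Phi\colon\bigoplus_{i\in I}\ell^2(J_i)\to\bigoplus_{i\in I}W_i$ by $\Phi(\{c_{i,j}\})=\{\Phi_i(\{c_{i,j}\}_j)\}_{i\in I}$, and identify $\bigoplus_{i\in I}\ell^2(J_i)$ canonically with $\ell^2\big(\bigsqcup_{i\in I}J_i\big)$, which is precisely the index space of the local frame $\mathcal F=\{\omega_i f_{i,j}\}_{i\in I,\,j\in J_i}$.

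The key step — and the only place where the hypothesis $0<A=\inf_i A_i\le\sup_i B_i=B<\infty$ is genuinely needed — is to check that $\Phi$ is itself a bounded bijection with bounded inverse. Summing the local two-sided estimates over $i$ gives $A\sum_{i,j}|c_{i,j}|^2\le\|\Phi(\{c_{i,j}\})\|^2=\sum_i\|\Phi_i(\{c_{i,j}\}_j)\|^2\le B\sum_{i,j}|c_{i,j}|^2$, so $\Phi$ is bounded and bounded below, hence injective with closed range. For surjectivity, given $\{g_i\}\in\bigoplus_i W_i$, pick $\{c_{i,j}\}_j=\Phi_i^{-1}g_i$; the uniform lower bound yields $\sum_{i,j}|c_{i,j}|^2\le A^{-1}\sum_i\|g_i\|^2<\infty$, so the preimage indeed lies in $\bigoplus_i\ell^2(J_i)$. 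Thus $\Phi$ is a topological isomorphism, and $\Phi^{-1}=\{\Phi_i^{-1}\}$ is also bounded.

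Finally I would verify the intertwining identity $T_{\mathcal F}=T_W\circ\Phi$: for $\{c_{i,j}\}$ in the domain,
\begin{equation*}
T_W\big(\Phi(\{c_{i,j}\})\big)=T_W\Big(\Big\{\textstyle\sum_{j\in J_i}c_{i,j}f_{i,j}\Big\}_{i\in I}\Big)=\sum_{i\in I}\omega_i\sum_{j\in J_i}c_{i,j}f_{i,j}=\sum_{i\in I,\,j\in J_i}c_{i,j}\,\omega_i f_{i,j}=T_{\mathcal F}(\{c_{i,j}\}).
\end{equation*}
Since $\Phi$ is a bijection, $N(T_{\mathcal F})=\Phi^{-1}\big(N(T_W)\big)$ and $\Phi$ restricts to a linear isomorphism $N(T_{\mathcal F})\to N(T_W)$, whence $\dim N(T_{\mathcal F})=\dim N(T_W)$, i.e. $e(\mathcal F)=e(W)$. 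I expect the uniform-invertibility argument for $\Phi$ in the infinite-index case to be the only real subtlety; everything else is bookkeeping. It is also worth emphasizing where the Riesz-basis assumption enters: it is exactly what makes each $\Phi_i$, and hence $\Phi$, injective, so that passing from $W$ to $\mathcal F$ introduces no spurious kernel elements — were the $\{f_{i,j}\}_j$ merely frames for $W_i$, the map $\Phi$ would only be surjective and $e(\mathcal F)$ could strictly exceed $e(W)$.
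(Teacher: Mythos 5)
Your proof is correct and follows essentially the same route as the paper's: both identify $N(T_{W})$ with $N(T_{\mathcal F})$ by expanding each component $g_{i}$ in the Riesz basis $\{f_{i,j}\}_{j\in J_{i}}$ and matching coefficient sequences. The paper asserts the bijective correspondence in one line, whereas you make the intertwining isomorphism $\Phi$ explicit and verify the uniform two-sided Riesz bounds needed for it to map $\ell^{2}\big(\bigsqcup_{i\in I}J_{i}\big)$ boundedly onto $\bigoplus_{i\in I}W_{i}$ — a detail the paper's proof glosses over but which is implicitly supplied by the local-frame hypothesis.
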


\begin{proof}
Suppose that $\{g_{i}\}_{i\in I}\in \bigoplus_{i \in I} W_{i}$, then $g_{i}=\sum_{j\in J_{i}}c_{i,j}f_{i,j}$ ($i\in I$), where $\{c_{i,j}\}_{j\in J_{i}} \in l^{2}(J_{i})$. If $T_{W}\big(\{g_{i}\}_{i\in I}\big)=0$, then $\sum_{i\in I, j\in J_{i}}c_{i,j}\omega_{i}f_{i,j}=\sum_{i\in I}\omega_{i}g_{i}=0$, which implies that $T_{\mathcal{F}}\big(\{c_{i,j}\}_{i\in I, j\in J_{i}}\big)=0$. Hence, there exists a bijective correspondence between $N(T_{W})$ and $N(T_{\mathcal{F}})$. Therefore, $e(W)=e(\mathcal{F})$.
\end{proof}
This statement leads directly to the conclusion that the excess of $W$ is not affected by the weights. Indeed, if $\mathcal{F}$ is the local frame introduced in the above proposition, then it is sufficient to show that $e(\mathcal{F})=e(\mathcal{F}^{1}),$ where $\mathcal{F}^{1}=\{f_{i,j}\}_{i \in I, j \in J_{i}}$ is a local frame of $W^{1}=\{(W_{i},1)\}_{i \in I}.$ To this end, suppose that $e(\mathcal{F}^{1})<\infty$. Then there exist $L \subseteq I$ and $K_{i} \subseteq J_{i},~(i \in L)$ such that $\{f_{i,j}\}_{i \in L, j \in K_{i}}$ is a Riesz basis and
\begin{equation*}
\sum_{i \in L}|J_{i}\smallsetminus K_{i}|+\sum_{i \in I\smallsetminus L}|J_{i}|=e(\mathcal{F}^{1})<\infty .
\end{equation*}
Obviously, $\{\omega_{i}f_{i,j}\}_{i \in L, j \in K_{i}}$ is also a Riesz basis. In particular, $e(\mathcal{F})=e(\mathcal{F}^{1}).$ Now, if $e(\mathcal{F}^{1})=\infty,$ we claim that $e(\mathcal{F})=\infty.$ Otherwise, assume that $e(\mathcal{F})<\infty.$ Repeating the above argument, it follows that $e(\mathcal{F}^{1})<\infty,$ which is a contradiction. Hence, $W$ and $W^{1}$ have the same excess, see also \cite[Theorem 6.8]{Excess 1} for another proof. In spite of this fact, we take weights into account in our examples. The following example confirms Proposition \ref{excess}.

\begin{example}\label{intersection}
Consider $W_{1}=\Bbb R^{2}\times\{0\}$ and $W_{2}=\{0\}\times \Bbb R^{2}$. Then \hbox{$W=\{(W_{i},\omega_{i})\}_{i=1}^2$} is a fusion frame for $\mathcal {H}=\Bbb R^{3}$ with the local frame
\begin{equation*}
\mathcal{F}=\big\{\omega_{1}(1,1,0),\omega_{1}(1,-1,0),\omega_{2}(0,1,0),\omega_{2}(0,-1,1)\big\}.
\end{equation*}
It is easily seen that $e(W)=e(\mathcal{F})=1$.
\end{example}
In this example, it is worth noting that $e(W)=1$ does not mean that a subspace can be removed, it actually means that a certain vector in a local frame obtained from Riesz bases can be removed. Let $W=\{(W_{i},\omega_{i})\}_{i \in I}$ be a fusion frame for $\mathcal {H}_n$. One direct consequence of the rank-nullity theorem is that 
\begin{equation}\label{finite dimensional}
e(W)=\sum_{i\in I}\textnormal{dim} W_{i}-\textnormal{dim} \mathcal {H}_n.
\end{equation}
The following theorem provides a method to compute the excess of fusion frames in infinite dimensional Hilbert spaces, see also \cite{Nga} for g-frames.
\begin{theorem}\label{th}
Let $W=\{(W_{i},\omega_{i})\}_{i \in I}$ be a fusion frame for $\mathcal {H}$. Then 
\begin{equation*}
e(W)=\sum_{i\in I,j\in J_{i}}\left( 1-\omega_{i}^{2}\left\langle  e_{i,j},S_{W}^{-1}e_{i,j} \right\rangle  \right) .
\end{equation*}
Specially, if $\textnormal{dim} W_{i}<\infty$ for all $i \in I$, then
\begin{equation*}
e(W)=\sum_{i\in I}\left( \textnormal{dim} W_{i}-\omega_{i}^{2}\textnormal{trace}\left(\pi_{W_{i}}S_{W}^{-1}\pi_{W_{i}}\right)\right).
\end{equation*}
\end{theorem}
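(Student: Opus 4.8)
The plan is to reduce the statement to the known formula for ordinary frames (Proposition \ref{balan deficit}) by passing to a carefully chosen local frame. First I would, for each $i \in I$, fix an orthonormal basis $\{e_{i,j}\}_{j \in J_i}$ of the closed subspace $W_i$ — this is the family appearing in the statement, and it exists since $W_i$ is separable. Each such basis is a Riesz basis for $W_i$ with bounds $A_i = B_i = 1$, so the uniform bound hypothesis of Theorem \ref{Local theorem} holds trivially; hence $\mathcal{F} = \{\omega_i e_{i,j}\}_{i \in I,\, j \in J_i}$ is the associated local frame of $W$, and Proposition \ref{excess} gives $e(W) = e(\mathcal{F})$. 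Thus it suffices to compute $e(\mathcal{F})$ via Proposition \ref{balan deficit}.

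The key observation is that the frame operator of $\mathcal{F}$ coincides with the fusion frame operator of $W$. Indeed, for every $f \in \mathcal{H}$,
\[
S_{\mathcal{F}} f = \sum_{i \in I}\sum_{j \in J_i}\langle f, \omega_i e_{i,j}\rangle\,\omega_i e_{i,j} = \sum_{i \in I}\omega_i^{2}\sum_{j \in J_i}\langle f, e_{i,j}\rangle e_{i,j} = \sum_{i \in I}\omega_i^{2}\pi_{W_i} f = S_W f,
\]
because $\{e_{i,j}\}_{j \in J_i}$ is an orthonormal basis of $W_i$. Consequently the canonical dual of $\mathcal{F}$ is $\{S_W^{-1}(\omega_i e_{i,j})\}_{i,j} = \{\omega_i S_W^{-1} e_{i,j}\}_{i,j}$, and Proposition \ref{balan deficit} applied to $\mathcal{F}$ yields
\[
e(W) = e(\mathcal{F}) = \sum_{i \in I,\, j \in J_i}\big(1 - \langle \omega_i e_{i,j}, \omega_i S_W^{-1} e_{i,j}\rangle\big) = \sum_{i \in I,\, j \in J_i}\big(1 - \omega_i^{2}\langle e_{i,j}, S_W^{-1} e_{i,j}\rangle\big),
\]
which is the first assertion.

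For the second assertion, assume $\textnormal{dim}\,W_i < \infty$ for every $i \in I$, so each $J_i$ is finite with $|J_i| = \textnormal{dim}\,W_i$. I would split the inner sum and use $e_{i,j} \in W_i$ (hence $\pi_{W_i} e_{i,j} = e_{i,j}$ and $\langle e_{i,j}, S_W^{-1} e_{i,j}\rangle = \langle e_{i,j}, \pi_{W_i} S_W^{-1}\pi_{W_i} e_{i,j}\rangle$ since $\pi_{W_i}^{*} = \pi_{W_i}$), obtaining
\[
\sum_{j \in J_i}\big(1 - \omega_i^{2}\langle e_{i,j}, S_W^{-1} e_{i,j}\rangle\big) = \textnormal{dim}\,W_i - \omega_i^{2}\sum_{j \in J_i}\langle e_{i,j}, \pi_{W_i} S_W^{-1}\pi_{W_i} e_{i,j}\rangle .
\]
Since $W_i$ is finite dimensional, $\pi_{W_i} S_W^{-1}\pi_{W_i}$ has finite rank, hence is trace class, and its trace is independent of the orthonormal basis of $\mathcal{H}$ used to compute it; completing $\{e_{i,j}\}_{j \in J_i}$ to an orthonormal basis of $\mathcal{H}$ by vectors lying in $W_i^{\perp}$ — which this operator annihilates — shows that $\textnormal{trace}(\pi_{W_i} S_W^{-1}\pi_{W_i})$ equals precisely the finite partial sum $\sum_{j \in J_i}\langle e_{i,j}, \pi_{W_i} S_W^{-1}\pi_{W_i} e_{i,j}\rangle$. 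Substituting this back and summing over $i \in I$ gives the stated formula.

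The argument is essentially routine once Proposition \ref{excess} and Proposition \ref{balan deficit} are available; I expect the only point needing genuine care to be the last one, namely justifying that the finite sum of diagonal entries may be rewritten as a trace, which rests squarely on the finite-dimensionality of each $W_i$ (and, in the infinite-dimensional ambient case, on checking that all the series involved converge — which is already guaranteed by Proposition \ref{balan deficit}, since each summand $1 - \omega_i^{2}\langle e_{i,j}, S_W^{-1} e_{i,j}\rangle$ lies in $[0,1]$). Everything else — existence of the orthonormal local bases, the identification $S_{\mathcal{F}} = S_W$, and the bookkeeping with indices — is direct.
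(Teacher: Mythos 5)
Your proposal is correct, and it reaches the stated formula by a slightly different route than the paper. You reduce everything to the ordinary-frame setting: you pick orthonormal bases $\{e_{i,j}\}_{j\in J_i}$ of the $W_i$, invoke Proposition~\ref{excess} to identify $e(W)$ with the excess of the local frame $\mathcal{F}=\{\omega_i e_{i,j}\}$, observe that $S_{\mathcal F}=S_W$, and then quote the Balan--Casazza--Heil--Landau formula (Proposition~\ref{balan deficit}) for $e(\mathcal F)$. The paper instead works directly on $\bigoplus_{i\in I}W_i$: it forms the orthonormal basis $\{E_{i,j}\}$ of the direct sum, writes the orthogonal projection onto $N(T_W)$ as $P=I_{\bigoplus W_i}-T_W^{*}S_W^{-1}T_W$, and computes $e(W)=\operatorname{trace}(P)=\sum_{i,j}\bigl(1-\langle T_WE_{i,j},S_W^{-1}T_WE_{i,j}\rangle\bigr)$, which is exactly the same quantity since $T_WE_{i,j}=\omega_i e_{i,j}$. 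The two arguments are two faces of one computation — Proposition~\ref{balan deficit} is itself proved in the cited reference by the trace-of-projection device — so what your version buys is economy (it reuses results already established earlier in the paper and needs no new operator identity on $\bigoplus W_i$), while the paper's version is self-contained at the fusion level and does not depend on the correspondence of Proposition~\ref{excess}. Your handling of the second assertion — passing from the diagonal sum over $\{e_{i,j}\}_{j\in J_i}$ to $\operatorname{trace}(\pi_{W_i}S_W^{-1}\pi_{W_i})$ by completing to an orthonormal basis of $\mathcal H$ with vectors in $W_i^{\perp}$ that the operator kills — is the same bookkeeping the paper performs, with the finite-dimensionality of $W_i$ used in the same place.
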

\begin{proof}
Let $\{e_{i,j}\}_{j \in J_{i}}$ be an orthonormal basis for $W_{i}~(i\in I)$ and denote elements $E_{i,j}$ of $\bigoplus_{i \in I} W_{i}$ by
\begin{equation}\label{oonnbb}
(E_{i,j})_{k}=\begin{cases}
e_{i,j},~& i=k
\\
0,~& i\neq k.
\end{cases}
\end{equation}
It is known that $\{E_{i,j}\}_{i\in I, j \in J_{i}}$ is an orthonormal basis for $\bigoplus_{i \in I} W_{i}$. The \hbox{orthogonal} projection $P$ of $\bigoplus_{i \in I}W_{i}$ onto $N(T_{W})$ is given by $P=I_{\bigoplus W_{i}}-T_{W}^{*}S_{W}^{-1}T_{W}.$
So, we obtain
\begin{align*}
e(W)=\textnormal{dim}N(T_{W})&=\textnormal{trace}(P)
\\&=\sum_{i\in I,j\in J_{i}}\big\langle E_{i,j},PE_{i,j} \big\rangle
\\&=\sum_{i\in I,j\in J_{i}}\left( 1-\left\langle T_{W}E_{i,j},S_{W}^{-1}T_{W}E_{i,j} \right\rangle \right) 
\\&=\sum_{i\in I,j\in J_{i}}\left( 1-\omega_{i}^{2}\left\langle  e_{i,j},S_{W}^{-1}e_{i,j} \right\rangle  \right).
\end{align*}
In particular, if $\textnormal{dim} W_{i}<\infty$ for all $i \in I$, then the above computations yield
\begin{align*}
e(W)&=\sum_{i\in I,j\in J_{i}}\left( 1-\omega_{i}^{2}\left\langle  e_{i,j},S_{W}^{-1}e_{i,j} \right\rangle  \right) 
\\&=\sum_{i\in I}\left( \textnormal{dim} W_{i}-\omega_{i}^{2}\sum_{j\in J_{i}}\left\langle  e_{i,j},\pi_{W_{i}}S_{W}^{-1}\pi_{W_{i}}e_{i,j} \right\rangle  \right) 
\\&=\sum_{i\in I}\left( \textnormal{dim} W_{i}-\omega_{i}^{2}\textnormal{trace}\left( \pi_{W_{i}}S_{W}^{-1}\pi_{W_{i}}\right) \right) .
\end{align*}
\end{proof}
\begin{example}\label{excess of fusion frames}
\begin{itemize}
\item[(1)] Consider $n \in \Bbb N$, $W_{1}=\Bbb R^{n}\times\{0\}$ and $W_{2}=\{0\}\times \Bbb R^{n}.$ Then $W=\{(W_{i},\omega_{i})\}_{i=1}^2$ is a fusion frame for $\mathcal {H}=\Bbb R^{n+1} $. Moreover, $e(W)=\textnormal{dim}N(T_{W})=n-1$. Now, according to \eqref{finite dimensional}, we obtain
\begin{align*}
e(W)&=\sum_{i=1}^2\textnormal{dim} W_{i}-\textnormal{dim} \mathcal {H}
\\&=2n-(n+1)=n-1.
\end{align*}
\item[(2)] Let $\{e_{i}\}_{i=1}^\infty$ be an orthonormal basis for $\mathcal {H}$. Consider $W_{i}=\textnormal{span}\{e_{i},e_{i+1}\}$, for all $i \in \Bbb{N}$. Then $W=\{(W_{i},\omega)\}_{i=1}^\infty$ is a 2-equi-dimensional fusion frame for $\mathcal {H}$ with the fusion frame operator $S_{W}=\textnormal{diag}\left( \omega ^{2},2\omega ^{2},2\omega ^{2},\ldots \right) .$ Direct computations show that $S_{W}^{-1}f=\omega^{-2}\left( f-\sum_{i=2}^\infty \langle f,e_{i} \rangle \frac{e_{i}}{2}\right),$ for all $f \in \mathcal {H}$. Therefore, by Theorem \ref{th} we obtain
\begin{align*}
e(W)&=\sum_{i=1}^{\infty}\left( \textnormal{dim} W_{i}-\omega ^{2}\textnormal{trace}\left( \pi_{W_{i}}S_{W}^{-1}\pi_{W_{i}}\right) \right) 
\\&=\left[ 2-\omega ^{2}\left( \omega ^{-2}+\frac{\omega ^{-2}}{2}\right) \right] +\sum_{i=2}^{\infty}\left( 2-\omega ^{2}\omega ^{-2}\right) =\infty.
\end{align*}

\item[(3)] Let $\{e_{i}\}_{i=1}^\infty$ be an orthonormal basis for $\mathcal {H}$ and $n \in \Bbb N$. Consider
\begin{equation*}
W_{i}:=\begin{cases}
\textnormal{span}\{e_{i},e_{i+1}\},~& 1 \leq i \leq n,
\\
\textnormal{span}\{e_{i+1}\},~& i > n.
\end{cases}
\end{equation*}
Then $W=\{(W_{i},\omega)\}_{i=1}^\infty$ is a fusion frame for $\mathcal {H}$ and
\begin{equation*}
S_{W}^{-1}f=\omega^{-2}\left( f-\sum_{i=2}^{n} \langle f,e_{i} \rangle \frac{e_{i}}{2}\right),\quad(f \in \mathcal {H}).
\end{equation*}
Moreover,
\begin{equation*}
\sum_{i=n+1}^{\infty}\left( \textnormal{dim} W_{i}-\omega ^{2}\textnormal{trace}\left( \pi_{W_{i}}S_{W}^{-1}\pi_{W_{i}}\right) \right) =\sum_{i=n+1}^{\infty}\left( 1-\omega ^{2}\omega ^{-2}\right) =0.
\end{equation*}
Hence, we obtain
\begin{align*}
e(W)&=\sum_{i=1}^{\infty}\left( \textnormal{dim} W_{i}-\omega_{i}^{2}\textnormal{trace}\left( \pi_{W_{i}}S_{W}^{-1}\pi_{W_{i}}\right) \right) 
\\&=\sum_{i=1}^{n}\left( \textnormal{dim} W_{i}-\omega_{i}^{2}\textnormal{trace}\left( \pi_{W_{i}}S_{W}^{-1}\pi_{W_{i}}\right) \right) 
\\&=2\left[ 2-\omega ^{2}\left( \omega ^{-2}+\frac{\omega ^{-2}}{2}\right) \right] +\sum_{i=2}^{n-1}\left( 2-\omega ^{2}\omega ^{-2}\right) =n-1.
\end{align*}

\item[(4)] Let $\{e_{i}\}_{i \in \Bbb Z}$ be an orthonormal basis for $\mathcal {H}$, $W_{1}=\overline{\textnormal{span}}_{i\geq 0}\{e_{i}\}$ and $W_{2}=\overline{\textnormal{span}}_{i\leq 0}\{e_{i}\}$. In \cite{frame of subspace}, it has been shown that $W=\{(W_{i},\omega)\}_{i=1}^2$ is an exact fusion frame which is not Riesz basis. A straightforward calculation shows that $S_{W}^{-1}f=\omega^{-2}\left( f-\langle f,e_{0} \rangle \frac{e_{0}}{2}\right) $, for all $f \in \mathcal {H}$. Therefore, it follows from Theorem \ref{th} that
\begin{align*}
e(W)&=\sum_{i\in \Bbb Z}\left( 1-\omega^{2}\left\langle  e_{i},S_{W}^{-1}e_{i} \right\rangle  \right) +\left( 1-\omega^{2}\left\langle  e_{0},S_{W}^{-1}e_{0} \right\rangle \right) 
\\&=\sum_{i\in \Bbb Z}\left( 1-\left\langle  e_{i},\left( e_{i}-\langle e_{i},e_{0} \rangle\frac{e_{0}}{2}\right)  \right\rangle \right) +\frac{1}{2}
\\&=\sum_{i\in \Bbb Z}\frac{\vert\langle e_{i},e_{0}\rangle \vert^{2}}{2}+\frac{1}{2}=1.
\end{align*}
\end{itemize}
\end{example}
Suppose that $W=\{(W_{i},\omega_{i})\}_{i \in I}$ is a fusion frame for $\mathcal {H}$ and $\{e_{j}\}_{j \in J}$ is an orthonormal basis for $\mathcal {H}$. Then $\mathcal{F}_{W}=\{\omega_{i}\pi_{W_{i}}e_{j}\}_{i \in I, j \in J}$ is a local frame of $W$ \cite{A.A.SH}. In the next theorem, we investigate the relationship between the excess of $W$ and $\mathcal{F}_{W}$.
\begin{theorem}\label{excess new local}
Let $W=\{(W_{i},\omega_{i})\}_{i \in I}$ be a fusion frame for $\mathcal{H}$ such that \hbox{$\textnormal{dim}W_{i}<\infty$} for all $i \in I$. Let $\mathcal{F}_{W}=\{\omega_{i}\pi_{W_{i}}e_{j}\}_{i \in I, j \in J}$. Then
\begin{equation*}
e(\mathcal{F}_{W})+\sum_{i\in I}\textnormal{dim}W_{i}=|I|\textnormal{dim}\mathcal{H}+e(W).
\end{equation*}
In particular, if $\textnormal{dim}\mathcal{H}<\infty,$ then $e(\mathcal{F}_{W})=(|I|-1)\textnormal{dim}\mathcal{H}$.
\end{theorem}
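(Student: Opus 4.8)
The plan is to exhibit the synthesis operator of $\mathcal{F}_W$ as a composition $T_{\mathcal{F}_W}=T_W\circ\Phi$ with $\Phi$ a bounded surjection, so that $e(\mathcal{F}_W)=\dim N(T_{\mathcal{F}_W})$ can be read off from $\dim N(\Phi)$ and $\dim N(T_W)=e(W)$. Let $U\colon\ell^2(J)\to\mathcal{H}$ be the unitary $\{c_j\}_{j\in J}\mapsto\sum_{j\in J}c_je_j$, identify $\ell^2(I\times J)$ with $\bigoplus_{i\in I}\ell^2(J)$, and define
\[
\Phi\bigl(\{c_{i,j}\}\bigr)=\Bigl\{\pi_{W_i}\!\!\sum_{j\in J}c_{i,j}e_j\Bigr\}_{i\in I}\in\bigoplus_{i\in I}W_i .
\]
Because $\|\pi_{W_i}\|\le 1$, $\Phi$ is well defined and bounded, and rearranging the unconditionally convergent series gives $T_{\mathcal{F}_W}=T_W\circ\Phi$. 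Block-diagonally $\Phi=\bigoplus_{i\in I}\pi_{W_i}U$, so it is surjective: for $\{h_i\}\in\bigoplus_iW_i$ the sequence $c_{i,j}=\langle h_i,e_j\rangle$ lies in $\ell^2(I\times J)$ (since $\sum_{i,j}|c_{i,j}|^2=\sum_i\|h_i\|^2<\infty$) and satisfies $\Phi(\{c_{i,j}\})=\{h_i\}$. Likewise $N(\Phi)=\bigoplus_{i\in I}U^{*}(W_i^{\perp})$, hence $\dim N(\Phi)=\sum_{i\in I}\dim W_i^{\perp}$.

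Next I would use the elementary fact that for a bounded surjection $\Phi$ of Hilbert spaces and a closed subspace $M$ of the target, $\dim\Phi^{-1}(M)=\dim N(\Phi)+\dim M$: the restriction of $\Phi$ to $\Phi^{-1}(M)\ominus N(\Phi)$ is a bounded bijection onto $M$, which is a topological isomorphism by the open mapping theorem. Applying this with $M=N(T_W)$ and $N(T_{\mathcal{F}_W})=\Phi^{-1}(N(T_W))$ gives $e(\mathcal{F}_W)=\dim N(\Phi)+e(W)=\sum_{i\in I}\dim W_i^{\perp}+e(W)$. Since $\dim W_i<\infty$ forces $\dim W_i+\dim W_i^{\perp}=\dim\mathcal{H}$, adding $\sum_{i\in I}\dim W_i$ to both sides and summing term by term yields
\[
e(\mathcal{F}_W)+\sum_{i\in I}\dim W_i=\sum_{i\in I}\dim\mathcal{H}+e(W)=|I|\dim\mathcal{H}+e(W),
\]
an identity in $[0,\infty]$. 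For the last statement, if $\dim\mathcal{H}<\infty$ then \eqref{finite dimensional} gives $e(W)=\sum_{i\in I}\dim W_i-\dim\mathcal{H}$, and substituting collapses the right-hand side to $(|I|-1)\dim\mathcal{H}$.

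I expect the only delicate point to be the dimension bookkeeping: confirming that $\Phi$ has closed range (it is surjective) so that $\dim N(T_{\mathcal{F}_W})=\dim N(\Phi)+\dim N(T_W)$ is legitimate, and interpreting the additive identity correctly in the infinite-dimensional case. An alternative, closer in spirit to the preceding results, avoids $\Phi$ altogether: a one-line computation using $\sum_{j\in J}\langle\pi_{W_i}f,e_j\rangle\pi_{W_i}e_j=\pi_{W_i}f$ shows that the frame operator of $\mathcal{F}_W$ equals the fusion frame operator $S_W$, so running the trace argument of Theorem \ref{th} for the frame $\mathcal{F}_W$ against the standard orthonormal basis of $\ell^2(I\times J)$ gives $e(\mathcal{F}_W)=\sum_{i\in I}\bigl(\dim\mathcal{H}-\omega_i^2\,\textnormal{trace}(\pi_{W_i}S_W^{-1}\pi_{W_i})\bigr)$; combining this with the formula of Theorem \ref{th} for $e(W)$ cancels the trace terms and reproduces the stated identity.
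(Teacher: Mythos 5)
Your argument is correct, and your main route is genuinely different from the paper's. The paper first checks that the frame operator of $\mathcal{F}_W$ coincides with $S_W$, then applies Balan--Casazza--Heil--Landau's trace formula (Proposition \ref{balan deficit}) to get $e(\mathcal{F}_W)=\sum_{i\in I}\bigl(\dim\mathcal{H}-\omega_i^2\,\textnormal{trace}(\pi_{W_i}S_W^{-1}\pi_{W_i})\bigr)$ and subtracts the formula of Theorem \ref{th} for $e(W)$ — this is exactly the ``alternative'' you sketch in your last sentences. Your primary argument instead factors the synthesis operator as $T_{\mathcal{F}_W}=T_W\circ\Phi$ with $\Phi=\bigoplus_i\pi_{W_i}U$ a bounded surjection, and counts $\dim\Phi^{-1}(N(T_W))=\dim N(\Phi)+\dim N(T_W)$; all the steps there check out ($N(\Phi)\subseteq\Phi^{-1}(N(T_W))$ since $0\in N(T_W)$, the restriction of $\Phi$ to $\Phi^{-1}(N(T_W))\ominus N(\Phi)$ is a bounded bijection onto the closed subspace $N(T_W)$, and topological isomorphisms preserve Hilbert dimension). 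What your route buys is the sharper intermediate identity $e(\mathcal{F}_W)=e(W)+\sum_{i\in I}\dim W_i^{\perp}$, which holds without any finiteness hypothesis, needs neither the canonical dual nor trace computations, and makes transparent that the theorem's identity degenerates to $\infty=\infty$ whenever $\dim\mathcal{H}=\infty$. What the paper's route buys is consistency with the surrounding machinery and, more importantly, the explicit trace expression for $e(\mathcal{F}_W)$, which is reused in the worked computation immediately following the theorem; your factorization does not produce that formula. Both derivations of the ``in particular'' clause via \eqref{finite dimensional} agree.
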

\begin{proof}
It is easily observed that the frame operators $S_{W}$ and $S_{\mathcal{F}_{W}}$ are the same. Indeed, for each $f \in \mathcal{H}$ we have
\begin{align*}
S_{\mathcal{F}_{W}}f&=\sum_{i\in I}\sum_{j\in I}\langle f,\omega_{i}\pi_{W_{i}}e_{j} \rangle \omega_{i}\pi_{W_{i}}e_{j}
\\&=\sum_{i\in I}\sum_{j\in I}\omega_{i}^{2}\pi_{W_{i}}\langle \pi_{W_{i}}f,e_{j} \rangle e_{j}
\\&=\sum_{i\in I}\omega_{i}^{2}\pi_{W_{i}}f=S_{W}f.
\end{align*}
Applying Proposition \ref{balan deficit} yields
\begin{align*}
e(\mathcal{F}_{W})&=\sum_{i\in I}\sum_{j\in I}\left( 1-\left\langle  \omega_{i}\pi_{W_{i}}e_{j},\omega_{i}S_{W}^{-1}\pi_{W_{i}}e_{j} \right\rangle \right) 
\\&=\sum_{i\in I}\bigg( \textnormal{dim}\mathcal{H}-\omega_{i}^{2}\sum_{j\in I}\left\langle  e_{j},\pi_{W_{i}}S_{W}^{-1}\pi_{W_{i}}e_{j} \right\rangle \bigg)
\\&=\sum_{i\in I}\left( \textnormal{dim}\mathcal{H}-\omega_{i}^{2}\textnormal{trace}\left( \pi_{W_{i}}S_{W}^{-1}\pi_{W_{i}}\right) \right) .
\end{align*}
Therefore, by using Theorem \ref{th} we obtain
\begin{align*}
e(\mathcal{F}_{W})+\sum_{i\in I}\textnormal{dim}W_{i}&=\sum_{i\in I}\left( \textnormal{dim}\mathcal{H}-\omega_{i}^{2}\textnormal{trace}\left( \pi_{W_{i}}S_{W}^{-1}\pi_{W_{i}}\right) +\textnormal{dim}W_{i}\right) 
\\&=|I|\textnormal{dim}\mathcal{H}+\sum_{i\in I}\left( \textnormal{dim} W_{i}-\omega_{i}^{2}\textnormal{trace}\left( \pi_{W_{i}}S_{W}^{-1}\pi_{W_{i}}\right) \right) 
\\&=|I|\textnormal{dim}\mathcal{H}+e(W).
\end{align*}
Moreover, if $\textnormal{dim}\mathcal{H}<\infty,$ then it gives
\begin{equation*}
e(\mathcal{F}_{W})=|I|\textnormal{dim}\mathcal{H}-\sum_{i\in I}\textnormal{dim}W_{i}+e(W)=(|I|-1)\textnormal{dim}\mathcal{H}.
\end{equation*}
\end{proof}
Consider the fusion frame $W=\{(W_{i},\omega_{i})\}_{i=1}^{2}$ introduced in Example \ref{excess of fusion frames}(1) with the local frame $\mathcal{F}_{W}=\{\omega_{i}\pi_{W_{i}}e_{j}\}_{i=1~j=1}^{2\quad n+1}$, where $\{e_{j}\}_{j=1}^{n+1}$ is an orthonormal basis for $\mathcal{H}=\Bbb R^{n+1}$. An easy computation shows that
\begin{equation*}
S_{W}=\textnormal{diag}\bigg( \omega_{1}^{2}, \underbrace{\left( \omega_{1}^{2}+\omega_{2}^{2}\right) , \ldots ,\left( \omega_{1}^{2}+\omega_{2}^{2}\right) }_{n-1},\omega_{2}^{2}\bigg) .
\end{equation*}
Although Theorem \ref{excess new local} assures that $e(\mathcal{F}_{W})=n+1,$ we compute the excess of $\mathcal{F}_{W}$ directly by means of Proposition \ref{balan deficit}.
\begin{align*}
e(\mathcal{F}_{W})&=\sum_{i=1}^2\sum_{j=1}^{n+1}\left( 1-\omega_{i}^{2}\left\langle \pi_{W_{i}}e_{j},S_{W}^{-1}\pi_{W_{i}}e_{j} \right\rangle \right) 
\\&=2(n+1)-\bigg[ \omega_{1}^{2}\left\langle  e_{1},S_{W}^{-1}e_{1} \right\rangle  +\left( \omega_{1}^{2}+\omega_{2}^{2}\right) \left\langle  e_{2},S_{W}^{-1}e_{2} \right\rangle  +\cdots 
\\&+ \left( \omega_{1}^{2}+\omega_{2}^{2}\right) \left\langle  e_{n},S_{W}^{-1}e_{n} \right\rangle +\omega_{2}^{2}\left\langle  e_{n+1},S_{W}^{-1}e_{n+1} \right\rangle \bigg]
\\&=2(n+1)-\bigg[\omega_{1}^{2}\omega_{1}^{-2} +\left( \omega_{1}^{2}+\omega_{2}^{2}\right) \left( \omega_{1}^{2}+\omega_{2}^{2}\right) ^{-1}+\cdots 
\\&+\left( \omega_{1}^{2}+\omega_{2}^{2}\right) \left( \omega_{1}^{2}+\omega_{2}^{2}\right) ^{-1} +\omega_{2}^{2}\omega_{2}^{-2}\bigg]=2(n+1)-(n+1)=n+1.
\end{align*}
Hence, applying Theorem \ref{excess new local} we get
\begin{align*}
e(W)&=e(\mathcal{F}_{W})+\sum_{i=1}^{2}\textnormal{dim}W_{i}-2\textnormal{dim}\mathcal{H}
\\&=3n+1-2(n+1)=n-1.
\end{align*}
\subsection{Orthogonal complement fusion frames}
Through the orthogonal complement, under certain conditions, a new fusion frame is obtained from a given fusion frame; so called the orthogonal complement fusion frame \cite{existence}. Here, we discuss the excess of these fusion frames and survey the relationship between the excess of a fusion frame and its orthogonal complement fusion frame.
\begin{definition}\cite{existence}
Let $W=\{(W_{i},\omega_{i})\}_{i \in I} $ be a fusion frame for $\mathcal {H}$. If the family $W^{\perp}:=\{(W_{i}^{\perp},\omega_{i})\}_{i \in I}$, where $ W_{i}^{\perp}$ is the orthogonal complement of $W_{i}$ is also a fusion frame, then we call $W^{\perp}$ the orthogonal complement fusion frame obtained from $W$.
\end{definition}
In \cite{existence}, it has been shown that if $W=\{(W_{i},\omega_{i})\}_{i \in I} $ is a fusion frame for $\mathcal {H}$ such that $\sum_{i \in I}\omega_{i}^{2}<\infty,$ then $W^\perp$ is a fusion frame for $\mathcal {H}$ if and only if $\bigcap_{i \in I}W_{i}=\{0\}$. In what follows, without losing the generality, we assume that $\sum_{i \in I}\omega_{i}^{2}<\infty.$ Note that in the case $|I|=1$, $W^\perp=\{0\}$ is not a fusion frame. Furthermore, provided that $W$ is a fusion Riesz basis, it is easily seen that $W^\perp$ is a fusion frame. The following lemma describes the excess of the orthogonal complement fusion frame associated with a fusion Riesz basis.
\begin{lemma}\label{excess of risze}
Let $W=\{(W_{i},\omega_{i})\}_{i \in I}$ be a fusion Riesz basis for an infinite dimensional Hilbert space $\mathcal {H}$. Then the following conditions are hold.
\begin{itemize}
\item[(1)] If $|I|=2$, then $e(W^{\perp})=0$.
\item[(2)] If $|I|>2$, then $e(W^{\perp})=\infty$.
\end{itemize}
\end{lemma}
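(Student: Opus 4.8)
The plan is to compute the kernel of $T_{W^{\perp}}$ directly, exploiting the fact that a fusion Riesz basis has an invertible synthesis operator. First I would record the structural facts needed. Since $W=\{(W_{i},\omega_{i})\}_{i\in I}$ is a fusion Riesz basis, the lower Riesz estimate $C\sum_{j}\|f_{j}\|^{2}\le\big\|\sum_{j}\omega_{j}f_{j}\big\|^{2}$ (valid for finitely supported $\{f_{j}\}$ and extending by continuity) together with the fusion Bessel bound shows that $T_{W}:\bigoplus_{i\in I}W_{i}\to\mathcal{H}$ is bounded below and onto, hence a topological isomorphism. Two consequences will be used: the family $\{W_{i}\}_{i\in I}$ is Riesz-independent (if $F\subseteq I$ is finite, $v_{i}\in W_{i}$ and $\sum_{i\in F}v_{i}=0$, then every $v_{i}=0$); and for every finite $F\subseteq I$ the subspace $\sum_{i\in F}W_{i}=T_{W}\big(\bigoplus_{i\in F}W_{i}\big)$ is closed in $\mathcal{H}$ and admits the closed algebraic complement $T_{W}\big(\bigoplus_{i\notin F}W_{i}\big)$. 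I also use completeness of $W$, i.e. $\overline{\textnormal{span}}\bigcup_{i\in I}W_{i}=\mathcal{H}$.

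For part (1), let $I=\{1,2\}$. A pair $(f_{1},f_{2})\in W_{1}^{\perp}\oplus W_{2}^{\perp}$ lies in $N(T_{W^{\perp}})$ precisely when $\omega_{1}f_{1}=-\omega_{2}f_{2}$; then $\omega_{1}f_{1}\in W_{1}^{\perp}\cap W_{2}^{\perp}=(W_{1}+W_{2})^{\perp}=\big(\overline{W_{1}+W_{2}}\big)^{\perp}=\mathcal{H}^{\perp}=\{0\}$ by completeness, so $f_{1}=f_{2}=0$. Hence $N(T_{W^{\perp}})=\{0\}$ and $e(W^{\perp})=\textnormal{dim}\,N(T_{W^{\perp}})=0$.

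For part (2), suppose $|I|>2$ and choose distinct indices $j,k\in I$ with $\sum_{i\in I\setminus\{j,k\}}\textnormal{dim}\,W_{i}=\infty$. Such a choice exists: if $I$ is infinite this is automatic since every $W_{i}\neq\{0\}$; if $I$ is finite then $\mathcal{H}=\bigoplus_{i\in I}W_{i}$ is infinite dimensional, so some $W_{l}$ is infinite dimensional, and as $|I|\ge 3$ one may take $j,k\in I\setminus\{l\}$. Put $M:=W_{j}+W_{k}$. By the structural facts $M$ is closed and $V:=T_{W}\big(\bigoplus_{i\in I\setminus\{j,k\}}W_{i}\big)$ is an algebraic complement of it; since $M$ is closed, $M^{\perp}$ is another algebraic complement, so $\textnormal{dim}\,M^{\perp}=\textnormal{dim}\,V=\textnormal{dim}\bigoplus_{i\in I\setminus\{j,k\}}W_{i}=\sum_{i\in I\setminus\{j,k\}}\textnormal{dim}\,W_{i}=\infty$. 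Finally, $M^{\perp}=W_{j}^{\perp}\cap W_{k}^{\perp}$, and for $g\in M^{\perp}$ the vector $\mathbf{g}=\{h_{i}\}_{i\in I}\in\bigoplus_{i\in I}W_{i}^{\perp}$ with $h_{j}=g$, $h_{k}=-\omega_{j}\omega_{k}^{-1}g$ and $h_{i}=0$ for $i\neq j,k$ satisfies $T_{W^{\perp}}\mathbf{g}=\omega_{j}g-\omega_{j}g=0$; the map $g\mapsto\mathbf{g}$ is linear and injective, so $N(T_{W^{\perp}})$ contains a copy of $M^{\perp}$, whence $e(W^{\perp})=\textnormal{dim}\,N(T_{W^{\perp}})\ge\textnormal{dim}\,M^{\perp}=\infty$.

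Both kernel computations are routine linear algebra once the setup is in place; the one step deserving care is the first paragraph, namely converting the abstract "fusion Riesz basis" hypothesis into the concrete statement that $T_{W}$ is a topological isomorphism and then transporting the obvious orthogonal splitting of $\bigoplus_{i}W_{i}$ through it to conclude that $W_{j}+W_{k}$ is closed with a complement of dimension $\sum_{i\neq j,k}\textnormal{dim}\,W_{i}$.
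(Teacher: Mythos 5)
Your proof is correct, but it follows a genuinely different route from the paper's. For part (1) the paper notes that $W_{1}^{\perp}\cap W_{2}^{\perp}=\{0\}$ and concludes that $W^{\perp}$ is itself a fusion Riesz basis, hence has excess zero; your direct computation of $N(T_{W^{\perp}})$ reaches the same conclusion without having to establish the Riesz basis property of $W^{\perp}$ at all. For part (2) the paper invokes Proposition \ref{mitra sh.} to write $W_{k}^{\perp}=\overline{\textnormal{span}}_{i\neq k}\{S_{W}^{-1}W_{i}\}$, observes that each subspace $S_{W}^{-1}W_{i}$ therefore occurs $|I|-1$ times among the $W_{k}^{\perp}$, and applies Proposition \ref{excess} to local Riesz bases to get the exact count $e(W^{\perp})=\sum_{i\in I}(|I|-2)|J_{i}|$, which is infinite because $\dim\mathcal{H}=\infty$ forces $|I|=\infty$ or $|J_{i}|=\infty$ for some $i$. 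You instead exhibit an explicit infinite-dimensional subspace of $N(T_{W^{\perp}})$ supported on just two coordinates, using that $W_{j}+W_{k}$ is closed of infinite codimension because $T_{W}$ is a topological isomorphism. Your route is more elementary (no canonical dual, no local-frame machinery) and makes the source of the redundancy concrete, but it only yields a lower bound; the paper's multiplicity count costs more setup yet produces the exact value of $e(W^{\perp})$, and it is precisely this count that is reused in the proof of Theorem \ref{perp th.} to evaluate $e(Z^{\perp})$ and $e(Z_{0}^{\perp})$. The one point worth making explicit in your write-up is the standing convention that each $W_{i}$ is nonzero, which is what makes $\sum_{i\neq j,k}\dim W_{i}=\infty$ automatic when $I$ is infinite.
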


\begin{proof}
Assume that $|I|=2$. A simple computation confirms that $\bigcap_{i \in I}W_{i}^{\perp}=\{0\}$. Hence, $W$ and $W^{\perp}$ are fusion Riesz bases which assures that $e(W^{\perp})=0$. Now,
suppose that $|I|>2$ and $\{e_{i,j}\}_{j \in J_{i}}$ is an orthonormal basis of $W_{i}$ for all $i \in I$. According to Proposition \ref{mitra sh.}, for all $k \in I$, we have
\begin{equation}\label{orth property}
\begin{aligned}
W_{k}^\perp &=\overline{\textnormal{span}}_{i\neq  k} \left\lbrace S_{W}^{-1}W_{i}\right\rbrace 
\\&=\overline{\textnormal{span}}_{i\neq  k, j \in J_{i}}\left\lbrace S_{W}^{-1}e_{i,j}\right\rbrace .
\end{aligned}
\end{equation}
Note that each subspace $S_{W}^{-1}W_{i}$ occurs $|I|-1$ times in $W^{\perp}.$ Since $\left\lbrace S_{W}^{-1}e_{i,j}\right\rbrace _{j \in J_{i}}$ is a local Riesz basis for $S_{W}^{-1}W_{i}$, thus $e(W^{\perp})=\sum_{i \in I}(|I|-2)|J_{i}|$ by Proposition \ref{excess}. The assumption $\textnormal{dim} \mathcal{H}=\infty$ ensures that $|I|=\infty$ or $|J_{i}|=\infty$, for some $i \in I$. In particular, $e(W^{\perp})=\infty$.
\end{proof}
The following lemma indicates that if $W=\{(W_{i},\omega_{i})\}_{i \in I}$ is a fusion Riesz basis with $|I|\geq2,$ then having at least two subspaces of $W^\perp$ is sufficient to constitute a fusion frame.

\begin{lemma}\label{dr.dr.}
Let $W=\{(W_{i},\omega_{i})\}_{i \in I}$ be a fusion Riesz basis for $\mathcal {H}$ with $|I|\geq2.$ Then $\mathcal{W}^\perp:=\{(W_{i}^\perp,\omega_{i})\}_{i=\ell,k}$ is a fusion frame for every distinct index $\ell,k \in I.$
\end{lemma}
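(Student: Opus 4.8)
The plan is to verify the two fusion-frame inequalities for the two-element family $\mathcal{W}^\perp=\{(W_\ell^\perp,\omega_\ell),(W_k^\perp,\omega_k)\}$. Since it has only finitely many members, the upper inequality is automatic, with Bessel bound $B=\omega_\ell^2+\omega_k^2$, so the whole content is a positive lower bound. Using the orthogonal decomposition $f=\pi_{W_i}f+\pi_{W_i^\perp}f$, for every $f\in\mathcal{H}$,
\begin{align*}
\omega_\ell^2\|\pi_{W_\ell^\perp}f\|^2+\omega_k^2\|\pi_{W_k^\perp}f\|^2
&=(\omega_\ell^2+\omega_k^2)\|f\|^2-\big\langle(\omega_\ell^2\pi_{W_\ell}+\omega_k^2\pi_{W_k})f,f\big\rangle,
\end{align*}
so that $\mathcal{W}^\perp$ is a fusion frame if and only if $\|\omega_\ell^2\pi_{W_\ell}+\omega_k^2\pi_{W_k}\|<\omega_\ell^2+\omega_k^2$, in which case one may take the lower bound $A=\omega_\ell^2+\omega_k^2-\|\omega_\ell^2\pi_{W_\ell}+\omega_k^2\pi_{W_k}\|$.

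The heart of the matter is then the geometric fact that $W_\ell$ and $W_k$ subtend a positive angle: there is $\gamma>0$ with $\|u-v\|^2\ge\gamma(\|u\|^2+\|v\|^2)$ for all $u\in W_\ell$ and $v\in W_k$. I would extract this from the fusion Riesz basis hypothesis as follows. Fix an orthonormal basis $\{e_{i,j}\}_{j\in J_i}$ for each $W_i$; by Theorem \ref{Local theorem} the local system $\mathcal{F}=\{\omega_i e_{i,j}\}_{i\in I,\,j\in J_i}$ is a Riesz basis for $\mathcal{H}$. Since $\ell\neq k$, the sub-collection $\{\omega_\ell e_{\ell,j}\}_j\cup\{\omega_k e_{k,j'}\}_{j'}$ is a subset of $\mathcal{F}$, hence a Riesz sequence; expressing an arbitrary $u\in W_\ell$ and $v\in W_k$ in these orthonormal systems (and using $\|u\|^2=\omega_\ell^2\sum|c_j|^2$, $\|v\|^2=\omega_k^2\sum|d_{j'}|^2$, then replacing $v$ by $-v$) turns the lower Riesz-sequence bound into exactly the claimed inequality, with $\gamma=\alpha/\max\{\omega_\ell^2,\omega_k^2\}$ and $\alpha$ the lower Riesz bound. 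Equivalently, one could recall that a fusion Riesz basis has the form $W_i=U(H_i)$ with $U$ bounded and invertible and the $H_i$ mutually orthogonal, and pull back through $U^{-1}$.

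It remains to close the loop by contradiction. If $\|\omega_\ell^2\pi_{W_\ell}+\omega_k^2\pi_{W_k}\|=\omega_\ell^2+\omega_k^2$, then, this operator being positive and self-adjoint, there are unit vectors $f_n$ with $\omega_\ell^2\|\pi_{W_\ell}f_n\|^2+\omega_k^2\|\pi_{W_k}f_n\|^2\to\omega_\ell^2+\omega_k^2$; since both norms are at most $1$, this forces $\|\pi_{W_\ell}f_n\|\to1$ and $\|\pi_{W_k}f_n\|\to1$, hence $\|f_n-\pi_{W_\ell}f_n\|\to0$ and $\|f_n-\pi_{W_k}f_n\|\to0$, and therefore $\|\pi_{W_\ell}f_n-\pi_{W_k}f_n\|\to0$. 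Applying the positive-angle inequality to $u=\pi_{W_\ell}f_n\in W_\ell$ and $v=\pi_{W_k}f_n\in W_k$ gives $\|\pi_{W_\ell}f_n-\pi_{W_k}f_n\|^2\ge\gamma(\|\pi_{W_\ell}f_n\|^2+\|\pi_{W_k}f_n\|^2)\to2\gamma>0$, contradicting the previous line. Hence the strict inequality holds and $\mathcal{W}^\perp$ is a fusion frame with bounds $A$ and $B$ above. I expect the middle step — isolating and proving the positive-angle estimate from the fusion Riesz basis structure (the bookkeeping with the weights, and the passage from a subset of a Riesz basis to a Riesz sequence) — to be the only genuine obstacle; the first and third steps are routine operator-norm manipulations.
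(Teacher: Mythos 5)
Your argument is correct, and it takes a genuinely different route from the paper's. The paper proves the lemma by first reducing, via the invertible operator $S_{W}^{1/2}$, to the family $S_{W}^{1/2}\mathcal{W}^{\perp}$, then invoking the identity $W_{k}^{\perp}=\overline{\textnormal{span}}_{i\neq k}\{S_{W}^{-1}W_{i}\}$ (a consequence of Proposition \ref{mitra sh.}) to write $\pi_{S_{W}^{1/2}W_{i}^{\perp}}=\sum_{j\neq i}\pi_{S_{W}^{-1/2}W_{j}}$ with respect to the fusion orthonormal basis $\{S_{W}^{-1/2}W_{j}\}_{j\in I}$; the lower bound then drops out of a rearrangement of sums with the explicit constant $\min\{\omega_{\ell}^{2},\omega_{k}^{2}\}$. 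You instead stay with the original subspaces, reformulate the lower frame inequality as the strict operator-norm inequality $\|\omega_{\ell}^{2}\pi_{W_{\ell}}+\omega_{k}^{2}\pi_{W_{k}}\|<\omega_{\ell}^{2}+\omega_{k}^{2}$, and obtain it from a positive-angle estimate between $W_{\ell}$ and $W_{k}$ extracted from the local Riesz basis of Theorem \ref{Local theorem}, closing by contradiction with an approximate maximizing sequence. Each step checks out: the equivalence via the numerical radius of a positive self-adjoint operator, the passage from a subset of a Riesz basis to a Riesz sequence with the same lower bound, the bookkeeping $\gamma=\alpha/\max\{\omega_{\ell}^{2},\omega_{k}^{2}\}$, and the final limit argument are all sound. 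What the paper's approach buys is an explicit lower frame bound (for the transformed system, hence for $\mathcal{W}^{\perp}$ after undoing $S_{W}^{1/2}$) and a structural description of the complements that is reused in Theorem \ref{perp th.}; what yours buys is independence from the orthogonal-complement identity \eqref{orth property} and from Proposition \ref{mitra sh.} altogether, at the cost that the positivity of $A=\omega_{\ell}^{2}+\omega_{k}^{2}-\|\omega_{\ell}^{2}\pi_{W_{\ell}}+\omega_{k}^{2}\pi_{W_{k}}\|$ is established non-quantitatively; if an explicit bound is wanted, one would have to convert the angle constant $\gamma$ into an estimate on that operator norm.
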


\begin{proof}
Assume that $W$ is a fusion Riesz basis, then $W^\perp$ is a fusion frame for $\mathcal {H}$. In order to prove that $\mathcal{W}^\perp$ is a fusion frame, it is sufficient to prove that $S_{W}^{1/2}\mathcal{W}^\perp$ is so, by \cite[Theorem 2.4]{Gavruta}. Due to the fact that $S_{W}^{1/2}\mathcal{W}^\perp$ is a fusion Bessel sequence, we just need to investigate the lower bound. To this end, we first observe that 
\begin{equation}\label{555}
S_{W}^{1/2}W_{i}^{\perp}=\overline{\textnormal{span}}_{j \in I, j\neq i} \left\lbrace S_{W}^{-1/2}W_{j}\right\rbrace , \quad (i= \ell,k)
\end{equation}
by applying \eqref{orth property}. Since $\left\lbrace \big(S_{W}^{-1/2}W_{j},1\big)\right\rbrace _{j \in I}$ is a fusion orthonormal basis \cite{frame of subspace} for $\mathcal {H}$, by employing \eqref{555} we get $\pi_{S_{W}^{1/2}W_{i}^\perp}=\sum_{j \in I, j \neq i}\pi_{S_{W}^{-1/2}W_{j}},$ for $i= \ell,k$. Thus, by taking $\gamma:=\textnormal{min}\left\lbrace \omega_{\ell}^{2},\omega_{k}^{2}\right\rbrace,$ we induce
\begin{align*}
\gamma \Vert f \Vert ^{2}&= \gamma \sum_{j \in I}\Vert \pi_{S_{W}^{-1/2}W_{j}}f\Vert ^{2}
\\&=\gamma \left(  \Vert \pi_{S_{W}^{-1/2}W_{\ell}}f\Vert ^{2}+\Vert \pi_{S_{W}^{-1/2}W_{k}}f\Vert ^{2}+\sum_{j\in I, j \neq \ell,k}\Vert \pi_{S_{W}^{-1/2}W_{j}}f\Vert ^{2}\right) 
\\&\leq \omega_{k}^{2}\Vert \pi_{S_{W}^{-1/2}W_{\ell}}f\Vert ^{2}+\omega_{\ell}^{2}\Vert \pi_{S_{W}^{-1/2}W_{k}}f\Vert ^{2}+(\omega_{k}^{2}+\omega_{\ell}^{2})\sum_{j\in I, j \neq \ell,k}\Vert \pi_{S_{W}^{-1/2}W_{j}}f\Vert ^{2}
\\&= \sum_{j\in I, j \neq k} \omega_{k}^{2}\Vert \pi_{S_{W}^{-1/2}W_{j}}f\Vert ^{2}+\sum_{j\in I, j \neq \ell} \omega_{\ell}^{2}\Vert \pi_{S_{W}^{-1/2}W_{j}}f\Vert ^{2}
\\&=\sum_{i=\ell,k}\omega_{i}^{2}\Vert \pi_{S_{W}^{1/2}W_{i}^\perp}f\Vert ^{2}.
\end{align*}
Therefore, $S_{W}^{1/2}\mathcal{W}^\perp$ is a fusion frame, as required.
\end{proof}

It should be mentioned that the converse of Lemma \ref{dr.dr.} is not generally true. For instance, one may consider the fusion frame $W=\{(W_{i},\omega_{i})\}_{i=1}^3$ of $\mathcal {H}_{3}$ where $W_{1}=\textnormal{span}\{e_{1}\},~W_{2}=\textnormal{span}\{e_{1}+e_{2}\}$ and $W_{3}=\textnormal{span}\{e_{2},e_{3}\}.$

The general connection between the excess of fusion frames $W$ and $W^\perp$ is given in the next theorem.
\begin{theorem}\label{perp th.}
Let $W=\{(W_{i},\omega_{i})\}_{i \in I}$ be a fusion frame for $\mathcal {H}$ such that $\bigcap_{i \in I}W_{i}=\{0\}$. Then
\begin{align}\label{excess perp}
e(W^{\perp})+e(W)=(|I|-2)\textnormal{dim}\mathcal {H}.
\end{align}
\end{theorem}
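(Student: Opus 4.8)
The plan is to reduce the infinite-dimensional statement to a trace computation via Theorem~\ref{th}, just as in the proof of that theorem, by summing local contributions over orthonormal bases of the $W_i$ and of the $W_i^\perp$. First I would fix an orthonormal basis $\{e_{i,j}\}_{j\in J_i}$ of $W_i$ and an orthonormal basis $\{e_{i,\ell}'\}_{\ell\in L_i}$ of $W_i^\perp$ for each $i\in I$, so that $\{e_{i,j}\}_j\cup\{e_{i,\ell}'\}_\ell$ is an orthonormal basis of $\mathcal H$; hence $|J_i|+|L_i|=\dim\mathcal H$ and, crucially, $\pi_{W_i}+\pi_{W_i^\perp}=I_{\mathcal H}$. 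Applying Theorem~\ref{th} to both $W$ and $W^\perp$ gives
\begin{align*}
e(W)&=\sum_{i\in I}\Big(|J_i|-\omega_i^2\,\textnormal{trace}\big(\pi_{W_i}S_W^{-1}\pi_{W_i}\big)\Big),\\
e(W^\perp)&=\sum_{i\in I}\Big(|L_i|-\omega_i^2\,\textnormal{trace}\big(\pi_{W_i^\perp}S_{W^\perp}^{-1}\pi_{W_i^\perp}\big)\Big),
\end{align*}
with the caveat that the dimensions may be infinite, which is handled below.

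The key algebraic observation is that $S_W+S_{W^\perp}=\sum_{i}\omega_i^2(\pi_{W_i}+\pi_{W_i^\perp})=\big(\sum_i\omega_i^2\big)I_{\mathcal H}=:cI_{\mathcal H}$, where $c=\sum_{i}\omega_i^2<\infty$ by our standing assumption. However this does not immediately relate $S_W^{-1}$ and $S_{W^\perp}^{-1}$ in a usable way, so instead I would work directly with the orthogonal projections onto the kernels, mirroring the trace argument of Theorem~\ref{th}. Writing $\mathcal K=\bigoplus_{i\in I}\mathcal H$ (the full direct sum, into which both $\bigoplus W_i$ and $\bigoplus W_i^\perp$ embed as complementary orthogonal summands fibrewise), one checks that $T_W$ and $T_{W^\perp}$, extended by zero, have synthesis operators whose sum on $\mathcal K$ is $\big(f_i\big)_i\mapsto\sum_i\omega_i f_i$, i.e.\ the synthesis operator $T$ of the (weighted) fusion orthonormal-type system $\{(\mathcal H,\omega_i)\}_{i\in I}$. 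The point is that $N(T_W)\oplus N(T_{W^\perp})$ sits inside $N(T)\subseteq\mathcal K$ and one can compare traces of the associated projections. Concretely, from $P_{N(T_W)}=I_{\bigoplus W_i}-T_W^*S_W^{-1}T_W$ and the analogous formula for $W^\perp$, and from $\dim N(T)=(|I|-1)\dim\mathcal H$ (since $\{(\mathcal H,\omega_i)\}_{i\in I}$ is, up to weights, $|I|$ copies of $\mathcal H$ synthesising onto $\mathcal H$), the identity $e(W)+e(W^\perp)+\dim\mathcal H=\dim N(T)+\dim\mathcal H=(|I|-1)\dim\mathcal H+\dim\mathcal H$ should fall out after accounting for the one ``extra'' copy — but the cleanest route is the finite-dimensional truncation described next.

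For the bookkeeping I would first dispose of the finite-dimensional case: if $\dim\mathcal H=m<\infty$ then $|I|<\infty$, $|J_i|+|L_i|=m$, and \eqref{finite dimensional} gives $e(W)=\sum_i|J_i|-m$ and $e(W^\perp)=\sum_i|L_i|-m=\sum_i(m-|J_i|)-m=(|I|-1)m-\sum_i|J_i|$, so adding yields $e(W)+e(W^\perp)=(|I|-1)m-m=(|I|-2)m$, which is \eqref{excess perp}. For the general case, note $\pi_{W_i}S_W^{-1}\pi_{W_i}+\pi_{W_i^\perp}S_W^{-1}\pi_{W_i^\perp}$ has trace $\textnormal{trace}(S_W^{-1})-\textnormal{trace}(\pi_{W_i}S_W^{-1}\pi_{W_i^\perp})-\textnormal{trace}(\pi_{W_i^\perp}S_W^{-1}\pi_{W_i})$; I would instead bypass $S_{W^\perp}$ entirely by expressing $e(W^\perp)$ through $W$ using the relation between $S_{W^\perp}=cI-S_W$ and its inverse on $\mathcal H$, then summing $\sum_i\omega_i^2\textnormal{trace}\big(\pi_{W_i^\perp}(cI-S_W)^{-1}\pi_{W_i^\perp}\big)$ against $\sum_i\omega_i^2\textnormal{trace}\big(\pi_{W_i}S_W^{-1}\pi_{W_i}\big)$. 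The main obstacle is precisely this: making the trace manipulations rigorous when $\dim\mathcal H=\infty$, since the individual terms $|J_i|$, $\textnormal{trace}(\pi_{W_i}S_W^{-1}\pi_{W_i})$ need not be finite and $S_W^{-1}$, $S_{W^\perp}^{-1}$ are not trace-class. The way around it is to avoid splitting the convergent series in Theorem~\ref{th} into divergent pieces: keep each summand $1-\omega_i^2\langle e_{i,j},S_W^{-1}e_{i,j}\rangle$ intact, add the corresponding summand for $W^\perp$, and show the combined local sum over a common orthonormal basis of $\mathcal H$ telescopes to the constant contribution $(|I|-2)\dim\mathcal H$ — with the finite-dimensional identity \eqref{finite dimensional} serving as the model computation that the infinite sum must reproduce termwise.
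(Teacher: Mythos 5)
Your finite-dimensional computation is correct and coincides with the paper's Case~1: both rest on the rank--nullity identity \eqref{finite dimensional} together with $\dim W_i+\dim W_i^{\perp}=\dim\mathcal H$. The genuine gap is the infinite-dimensional case, which you sketch but do not close, and the route you propose does not lead there. When $\dim\mathcal H=\infty$, identity \eqref{excess perp} is not a cancellation statement: for $|I|\ge 3$ the right-hand side is $\infty$, so the entire content of the theorem is the qualitative claim that $e(W^{\perp})=\infty$. Your plan --- keep the summands of Theorem~\ref{th} for $W$ and $W^{\perp}$ paired and show the combined series ``telescopes'' to $(|I|-2)\dim\mathcal H$ --- presupposes a finite, termwise-computable answer. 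Moreover, the two series have no common index set (the orthonormal bases of $W_i$ and of $W_i^{\perp}$ vary with $i$), the individual traces $\textnormal{trace}(\pi_{W_i}S_W^{-1}\pi_{W_i})$ are infinite whenever $\dim W_i=\infty$ (since $S_W^{-1}\ge B^{-1}I_{\mathcal H}$), and the manipulation that makes the finite-dimensional version of your idea work, namely $\sum_{i}\omega_i^2\,\textnormal{trace}\bigl(S_W^{-1}\pi_{W_i}\bigr)=\textnormal{trace}\bigl(S_W^{-1}S_W\bigr)=\dim\mathcal H$, is precisely the step that becomes meaningless when $\textnormal{trace}(I_{\mathcal H})=\infty$. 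Your alternative embedding $N(T_W)\oplus N(T_{W^{\perp}})\subseteq N(T)$ only yields the upper bound $(|I|-1)\dim\mathcal H$, and you abandon it without identifying the missing complement of dimension $\dim\mathcal H$ --- which would itself be essentially equivalent to the theorem.

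What is actually needed, and what the paper does in its Case~2, is a structural argument rather than a summation identity: assuming $e(W)=m<\infty$ and $|I|>2$, delete the $m$ redundant local vectors from one subspace $W_{i_0}$ to obtain a fusion Riesz basis $Z$; use Proposition~\ref{mitra sh.} to write $Z_k^{\perp}=\overline{\textnormal{span}}_{i\neq k}\{S_Z^{-1}Z_i\}$, so that each $S_Z^{-1}Z_i$ occurs $|I|-1$ times among the subspaces of $Z^{\perp}$ and Proposition~\ref{excess} gives $e(Z^{\perp})=\sum_{i}(|I|-2)|J_i|=\infty$ (Lemma~\ref{excess of risze}); then sandwich $e(Z_0^{\perp})\le e(W^{\perp})\le e(Z^{\perp})$, where $Z_0^{\perp}$ is a fusion frame by Lemma~\ref{dr.dr.}, and conclude $e(Z_0^{\perp})=\infty$ by the same multiplicity count. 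None of this mechanism appears in your proposal, so as written it establishes the theorem only for $\dim\mathcal H<\infty$.
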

\begin{proof}
We split the proof into the following two cases.

$\mathbf{Case~1.}$ Assume $\textnormal{dim} \mathcal{H}=n<\infty$. Applying \eqref{finite dimensional} yields
\begin{align*}
e(W^{\perp})+e(W)&=\sum_{i\in I}\textnormal{dim}W^{\perp}_{i}+\sum_{i\in I}\textnormal{dim}W_{i}-2n
\\&=\sum_{i\in I}\big(n-\textnormal{dim}W_{i}\big)+\sum_{i\in I}\textnormal{dim}W_{i}-2n
\\&=n(|I|-2),
\end{align*}
which is the desired result.

$\mathbf{Case~2.}$ Suppose $\textnormal{dim} \mathcal{H}=\infty$. If $W$ is a fusion Riesz basis, then \eqref{excess perp} obviously holds by Lemma \ref{excess of risze}. Let $W$ be a fusion frame and $\{e_{i,j}\}_{j \in J_{i}}$ be an orthonormal basis of $W_{i}$ for each $i \in I$. In the case $e(W)=\infty$, the equation \eqref{excess perp} obviously holds. So, let $e(W)=m<\infty$ and $|I|>2$. Then, without losing the generality, we suppose that $\{e_{i_{0},j}\}_{j \in L}$ is a family of redundant vectors in $W_{i_{0}}$, where $L \subset J_{i_{0}}$ and $|L|=m$. Take
\begin{equation*}\label{vvv}
Z_{i}:=\begin{cases}
W_{i},~& i \neq i_{0}
\\
\overline{\textnormal{span}}\{e_{i_{0},j}\}_{j \in J_{i_{0}}\smallsetminus L },~& i= i_{0}.
\end{cases}
\end{equation*}
It is obviously seen that $Z=\{(Z_{i},\omega_{i})\}_{i \in I}$ is a fusion Riesz basis for $\mathcal{H}$ and $Z^{\perp}=\big\{(Z_{i}^{\perp},\omega_{i})\big\}_{i \in I}$ is a fusion frame with $e(Z^{\perp})=\infty,$ by Lemma \ref{excess of risze}. Moreover, it follows from Lemma \ref{dr.dr.} that $Z_{0}^{\perp}:=\big\{(Z_{i}^{\perp},\omega_{i})\big\}_{i \in I,i \neq i_{0}}$ is also a fusion frame for $\mathcal{H}$. According to Proposition \ref{mitra sh.}, $Z_{k}^{\perp}$ may be written as $\overline{\textnormal{span}}_{i\neq  k} \left\lbrace S_{Z}^{-1}Z_{i}\right\rbrace,~(k \in I)$. In addition,
\begin{align*}
W_{i_{0}}^{\perp}&=\left( Z_{i_{0}} + \textnormal{span}\{e_{i_{0},j}\}_{j \in L }\right) ^{\perp}
\\&=Z_{i_{0}}^{\perp}\cap \left( \textnormal{span}\{e_{i_{0},j}\}_{j \in L }\right) ^{\perp}.
\end{align*}
Hence, we get
\begin{equation*}\label{www}
W_{i}^{\perp}=\begin{cases}
\overline{\textnormal{span}}_{j\neq i} \left\lbrace S_{Z}^{-1}Z_{j}\right\rbrace ,~& i \neq i_{0}
\\
\overline{\textnormal{span}}_{j\neq i} \left\lbrace S_{Z}^{-1}Z_{j}\right\rbrace \cap \left( \textnormal{span}\{e_{i_{0},j}\}_{j \in L }\right) ^{\perp},~& i= i_{0}.
\end{cases}
\end{equation*}
Furthermore, it is not difficult to see that
\begin{equation}\label{eee}
e(Z_{0}^{\perp})\leq e(W^{\perp})\leq e(Z^{\perp}).
\end{equation}
As such, it suffices to prove that $e(Z_{0}^{\perp})=\infty$. An easy computation shows that every subspace  $S_{Z}^{-1}Z_{i}~(i \neq i_{0})$ occurs $|I|-2$ times and $S_{Z}^{-1}Z_{i_{0}}$ occurs $|I|-1$ times in $Z_{0}^{\perp}$. Applying the same argument as in Lemma \ref{excess of risze}, we have
\begin{equation*}
e(Z_{0}^{\perp})=(|I|-2)|J_{i_{0}}|+\sum_{i \in I, i \neq i_{0}}(|I|-3)|J_{i}|.
\end{equation*}
Due to $e(Z^{\perp})=\sum_{i \in I}(|I|-2)|J_{i}|=\infty$, we infer that  $|I|=\infty$ or $|J_{i}|=\infty$, for some $i \in I$. Especially, $e(Z_{0}^{\perp})=\infty$. Therefore, it follows from \eqref{eee} that $e(W^{\perp})=\infty$.
This completes the proof.
\end{proof}
The importance of Theorem \ref{perp th.} appears between the excess of $W$ and $W^\perp,$ one of which is easily computed. We now examine the validity of the above statement through two examples.
\begin{example}
\begin{itemize}
Let $\{e_{i}\}_{i=1}^n $ be an orthonormal basis for $\mathcal {H}_{n}$ and $\{\omega_{i}\}_{i \in I}$ a family of weights.
\item[(1)]
Take $W_{2i-1}=W_{2i}=\textnormal{span} \{e_{i}\},$ for $i=1,\ldots,n$. Then $W=\left\lbrace W_{2i-1},W_{2i}\right\rbrace_{i=1}^n$ is a fusion frame for $\mathcal {H}_{n}$ via $2n$ subspaces with respect to $\{\omega_{i}\}_{i \in I}$. Moreover, $W^{\perp}=\left\lbrace W_{2i-1}^{\perp},W_{2i}^{\perp}\right\rbrace $ is also a fusion frame for $\mathcal {H}_{n}$ with respect to $\{\omega_{i}\}_{i \in I}$. In view of \eqref{finite dimensional}, we get $e(W)=n$ and $e(W^{\perp})=n(2n-3)$. Hence,
\begin{align*}
e(W^{\perp})+e(W)&=n(2n-3)+n
\\&=n(2n-2).
\end{align*}
\item[(2)]
Consider $W_{i}=\textnormal{span} \left\lbrace e_{i},e_{i+1}\right\rbrace $ for each $i \in I$, where $|I|=n-1$. Then $W=\{(W_{i},\omega_{i})\}_{i \in I}$ is a fusion frame for $\mathcal {H}_{n}$ and $e(W)=n-2,$ by \eqref{finite dimensional}. In addition, $W^{\perp}=\left\lbrace (W_{i}^{\perp},\omega_{i})\right\rbrace _{i \in I}$ is a fusion frame for $\mathcal {H}_{n}$, where $W_{i}^{\perp}=\textnormal{span} \{e_{j}\}_{j\neq i,i+1}$ for each $i \in I$. So, it gives
\begin{align*}
e(W^{\perp})&=\sum_{i \in I}\textnormal{dim}W^{\perp}_{i}-\textnormal{dim}\mathcal {H}_{n}
\\&=(n-1)(n-2)-n=n^2-4n+2.
\end{align*}
Therefore,
\begin{align*}
e(W^{\perp})+e(W)&=n^2-4n+2+n-2
\\&=n\big((n-1)-2\big)=n( |I|-2) .
\end{align*}
\end{itemize}
\end{example}

\section{Excess of Dual Fusion Frames}
One of the most important properties of computing the excess in the fusion frame setting is to facilitate the construction of fusion Riesz bases, which is \hbox{provided} by the excess of their local frames. Taking this into account, our purpose is to obtain the excess of dual fusion frames. First, we recall that two fusion frames $W=\{(W_{i},\omega_{i})\}_{i \in I}$ and $Z=\{(Z_{i},\omega_{i})\}_{i \in I}$ are called  \textit{equivalent} \cite{frame of subspace} if there exists an invertible operator $U\in B(\mathcal {H})$ such that $Z_{i}=U(W_{i}),$ for all $i \in I$. It is shown that equivalent fusion frames have the same excess. Indeed, if $Z_{i}=U(W_{i})~(i \in I)$ for some invertible operator $U\in B(\mathcal {H})$, then $\{f_{i}\}_{i\in I}\in N(T_{Z}) \Leftrightarrow \{U^{-1}f_{i}\}_{i\in I}\in N(T_{W})$. Hence, there exists a bijective correspondence between $N(T_{Z})$ and $N(T_{W})$, which implies that $e(Z)=e(W)$. In particular, every fusion frame with its canonical dual have the same excess.

We now review the concept of $Q$-dual fusion frames. We denote the set of bounded left inverses of $T_{W}^{*}$ by $\mathcal {L}_{T_{W}^{*}}$. 
\begin{definition}\cite{Heineken}\label{Q}
A fusion frame $V=\{(V_{i},\upsilon_{i})\}_{i \in I}$ of $\mathcal {H}$ is called a \textit{$Q$-dual fusion frame} of $W=\{(W_{i},\omega_{i})\}_{i \in I}$ if there exists a $Q \in B\left( \bigoplus_{i \in I} W_{i},\bigoplus_{i \in I} V_{i}\right) $ such that
\begin{equation*}
T_{V}QT_{W}^{*}=I_{\mathcal {H}}.
\end{equation*}
Moreover, if the operator $M_{i}: \bigoplus_{i \in I} W_{i}\rightarrow \bigoplus_{i \in I} W_{i}$ is given by $M_{i}\{f_{j}\}_{j \in I}=\{\delta_{i,j}f_{j}\}_{j \in I}$, then 
\begin{itemize}
\item[(i)]$Q$ is called \textit{block-diagonal}, if $QM_{i}\left( \bigoplus_{i \in I} W_{i}\right) \subseteq M_{i}\left( \bigoplus_{i \in I} V_{i}\right) $ for all $i \in I$,
\item[(ii)]$Q$ is called \textit{component preserving}, if $QM_{i}\left( \bigoplus_{i \in I} W_{i}\right) =M_{i}\left( \bigoplus_{i \in I} V_{i}\right) $ for all $i \in I$.
\end{itemize}
\end{definition}
In Definition \ref{Q}, if $Q$ is block-diagonal (component preserving) we say that $V$ is \textit{block-diagonal (component preserving) dual fusion frame} of $W.$ It is easily seen that every alternate (G\~{a}vru\c{t}a) dual fusion frame $V$ of $W$ is a block-diagonal dual fusion frame with $Q=\varphi_{VW}$, which is introduced in \cite{Osgooei}. It is worthwhile to mention that, unlike ordinary frames, every fusion frame and its $Q$-dual may not necessarily have the same excess, see Examples \ref{example Q-dual} and \ref{exa last}. However, in the following proposition, we show that every fusion frame has a non canonical $Q$-dual fusion frame with the same excess. 

\begin{proposition}\label{dual Riesz non canonical}
Let $W=\{(W_{i},\omega_{i})\}_{i \in I}$ be a fusion frame of $\mathcal {H}$. Then there exists a non canonical $Q$-dual fusion frame $V=\{(V_{i},\omega_{i})\}_{i \in I}$ such that $e(V)=e(W)$.
\end{proposition}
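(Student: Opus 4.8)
The plan is to build $V$ by a small perturbation of the canonical dual that keeps the underlying geometry (and hence the excess) unchanged while destroying canonicity. Recall that the canonical dual $\widetilde{W}=\{(S_W^{-1}W_i,\omega_i)\}_{i\in I}$ is a $Q$-dual with $Q=\varphi_{\widetilde W W}$, and that equivalent fusion frames have the same excess. So it suffices to produce an invertible operator $U\in B(\mathcal H)$, \emph{different from a scalar multiple of the identity in a way that matters}, such that $V_i:=U(S_W^{-1}W_i)$ still yields a $Q$-dual of $W$, and such that $V\neq\widetilde W$ as a fusion frame. Since $V_i=U(S_W^{-1}W_i)$ makes $V$ equivalent to $\widetilde W$, we immediately get $e(V)=e(\widetilde W)=e(W)$ by the remarks preceding the statement.

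The key steps, in order, are as follows. First I would recall from the excerpt that a fusion Bessel sequence $V$ is a $Q$-dual of $W$ precisely when $T_V Q T_W^*=I_{\mathcal H}$ for some $Q\in B(\bigoplus W_i,\bigoplus V_i)$; for the canonical dual this holds with the standard $Q=\varphi_{\widetilde W W}$, i.e. $\{f_i\}\mapsto\{\pi_{S_W^{-1}W_i}S_W^{-1}f_i\}$. Second, pick any invertible $U\in B(\mathcal H)$ and set $V_i=U(S_W^{-1}W_i)$, $\upsilon_i=\omega_i$; then $V$ is a fusion frame (the image of a fusion frame under an invertible operator). Third, I would verify that $V$ is a $Q$-dual of $W$ by exhibiting the operator explicitly: define $Q\in B(\bigoplus W_i,\bigoplus V_i)$ by $Q\{f_i\}=\{\pi_{V_i}U S_W^{-1}f_i\}$ and check $T_V Q T_W^* f = \sum_i \omega_i \pi_{V_i} U S_W^{-1}(\omega_i\pi_{W_i}f)$; one then wants this to collapse to $f$. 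Fourth, choose $U$ so that $V$ is genuinely non-canonical: the canonical dual of $W$ is unique and equals $\widetilde W$, so any $U$ for which $U(S_W^{-1}W_i)\neq S_W^{-1}W_i$ for at least one $i$ (with the reconstruction still valid) does the job; e.g. a $U$ that is the identity on $(\sum_i S_W^{-1}W_i)^{\perp}$ is not even available in general, so more care is needed — see the obstacle below.

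The main obstacle is step three combined with step four: making $T_V Q T_W^*=I$ hold for a genuinely altered family. With $V_i=U(S_W^{-1}W_i)$ one does \emph{not} automatically have $\pi_{V_i}U=U\pi_{S_W^{-1}W_i}$, so the naive $Q$ need not reconstruct. The clean fix is to choose $U$ that \emph{commutes suitably with the projections}, for instance $U=S_W^{-1/2}$ composed with... — better, simply take $U$ to be a fixed scalar-free invertible operator that leaves each $S_W^{-1}W_i$ invariant yet acts as a different invertible operator \emph{within} some $S_W^{-1}W_{i_0}$ of dimension $\ge 1$; then $\pi_{V_i}=\pi_{S_W^{-1}W_i}$ for all $i$, so with $Q\{f_i\}=\{\pi_{S_W^{-1}W_i}S_W^{-1}f_i\}=\varphi_{\widetilde W W}\{f_i\}$ we still get $T_V Q T_W^*=T_{\widetilde W}\varphi_{\widetilde W W}T_W^*=I$, while $V$ is not the canonical dual because, although $V_i=\widetilde W_i$ as subspaces, the $Q$-dual relation does not force $Q$ to be canonical — so in fact the cleanest route is to keep the subspaces equal to $\widetilde W_i$ and instead exhibit a \emph{non-canonical} $Q$ (e.g. $Q=\varphi_{\widetilde W W}+Q_0$ with $Q_0$ mapping into $N(T_{\widetilde W})$), which leaves $e(V)=e(\widetilde W)=e(W)$ trivially since $V=\widetilde W$ as a fusion frame. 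I would present whichever of these two variants the authors intend; the honest technical content is verifying $T_V Q T_W^*=I$ for the chosen $(V,Q)$ and checking non-canonicity, both of which are short once $U$ (or $Q_0$) is pinned down.
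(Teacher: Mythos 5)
There is a genuine gap: you correctly isolate the obstacle (for $V_i=U(S_W^{-1}W_i)$ one does not have $\pi_{V_i}U=U\pi_{S_W^{-1}W_i}$, so the naive $Q$ fails to reconstruct), but you never resolve it. Your two fallbacks do not close it. In the first, choosing $U$ to leave each $S_W^{-1}W_i$ invariant forces $V_i=\widetilde W_i$ with $\upsilon_i=\omega_i$, so $V=\widetilde W$ \emph{as a fusion frame}; the same is true, by construction, of your second variant, where only $Q$ is perturbed. A fusion frame is the family of subspaces with weights, so in both cases you have produced the canonical dual, not a non-canonical one, and the proposition is not proved. Ending with ``whichever of these two variants the authors intend'' leaves the actual construction unspecified.

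The paper's proof is in fact the missing instance of your own Variant A, with the correct choice of $U$ and $Q$ read off from an ordinary local frame rather than from projections. Fix a Riesz basis $\{f_{i,j}\}_{j\in J_i}$ of each $W_i$, let $\mathcal F=\{\omega_i f_{i,j}\}_{i,j}$ be the resulting local frame with frame operator $S_{\mathcal F}$ (which is \emph{not} $S_W$ unless the bases are orthonormal), and set $V_i=\overline{\textnormal{span}}_j\{\omega_i S_{\mathcal F}^{-1}f_{i,j}\}=S_{\mathcal F}^{-1}W_i$, i.e.\ $U=S_{\mathcal F}^{-1}$. Define
\begin{equation*}
Q\{h_i\}_{i\in I}=\Big\{\sum_{j\in J_i}\langle h_i,f_{i,j}\rangle S_{\mathcal F}^{-1}f_{i,j}\Big\}_{i\in I},
\end{equation*}
which uses the Riesz-basis coefficients instead of $\pi_{V_i}US_W^{-1}$; then
\begin{equation*}
T_VQT_W^*f=\sum_{i\in I}\sum_{j\in J_i}\big\langle f,\omega_i f_{i,j}\big\rangle\,\omega_i S_{\mathcal F}^{-1}f_{i,j}=S_{\mathcal F}^{-1}S_{\mathcal F}f=f,
\end{equation*}
so reconstruction is exactly the canonical-dual identity for the ordinary frame $\mathcal F$ (the paper invokes a theorem of Heineken et al.\ to conclude $V$ is a $Q$-dual fusion frame). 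Since $\{\omega_i S_{\mathcal F}^{-1}f_{i,j}\}_j$ is a Riesz basis of $V_i$ and dual frames have equal excess, Proposition~\ref{excess} gives $e(V)=e(\{\omega_i S_{\mathcal F}^{-1}f_{i,j}\}_{i,j})=e(\mathcal F)=e(W)$ — note this also agrees with your equivalence argument, since $V_i=S_{\mathcal F}^{-1}(W_i)$ with $S_{\mathcal F}^{-1}$ invertible. Non-canonicity holds because $S_{\mathcal F}^{-1}W_i\neq S_W^{-1}W_i$ for a generic choice of Riesz bases. Your high-level strategy was right; the missing idea is to define $Q$ through the local Riesz bases so that $T_VQT_W^*=I_{\mathcal H}$ reduces to a known frame identity.
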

\begin{proof}
Assume that $\mathcal{F}=\{\omega_{i}f_{i,j}\}_{i \in I, j \in J_{i}}$ is a local frame of $W,$ where $\{f_{i,j}\}_{j \in J_{i}}$ is a Riesz basis of $W_{i}$ for all $i \in I$. Consider $V_{i}=\overline{\textnormal{span}}\left\lbrace \omega_{i}S_{\mathcal{F}}^{-1}f_{i,j}\right\rbrace _{j \in J_{i}},$ for all $i \in I$ and take
\begin{equation*}
Q: \bigoplus_{i \in I} W_{i}\rightarrow \bigoplus_{i \in I} V_{i},~~Q\{h_{i}\}_{i \in I}=\left\lbrace \sum_{j \in J_{i}}\langle h_{i}, f_{i,j}\rangle S_{\mathcal{F}}^{-1}f_{i,j}\right\rbrace _{i \in I}.
\end{equation*}
Since $\left\lbrace \omega_{i}S_{\mathcal{F}}^{-1}f_{i,j}\right\rbrace _{i \in I, j \in J_{i}}$ is the canonical dual of $\mathcal{F},$ we conclude that $V=\{(V_{i},\omega_{i})\}_{i \in I}$ is a $Q$-dual fusion frame of $W,$ by \cite[Theorem 3.12]{Heineken}. Therefore, the result is obtained from Proposition \ref{excess} and noting the fact that dual frames have the same excess.
\end{proof}

Suppoes that $W=\{(W_{i},\omega_{i})\}_{i \in I}$ is a fusion Riesz basis of $\mathcal {H}$. Although  \cite[Theorem 2.9]{Arabyani dual} proved that $\widetilde{W}$ is the only G\~{a}vru\c{t}a dual fusion Riesz basis of $W,$ Proposition 4.2 presents a non canonical $Q$-dual fusion Riesz basis of $W$.
\begin{corollary}
Every fusion Riesz basis has a non canonical $Q$-dual fusion Riesz basis.
\end{corollary}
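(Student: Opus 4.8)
The plan is to specialize Proposition~\ref{dual Riesz non canonical} to the case where $W$ is a fusion Riesz basis and argue that the $Q$-dual $V$ constructed there is again a fusion Riesz basis, rather than merely a fusion frame. The key observation is that the construction in Proposition~\ref{dual Riesz non canonical} starts from a local Riesz basis $\{f_{i,j}\}_{j\in J_i}$ of each $W_i$ and produces $V_i=\overline{\textnormal{span}}\{\omega_i S_{\mathcal F}^{-1}f_{i,j}\}_{j\in J_i}$, where $\mathcal F=\{\omega_i f_{i,j}\}$ is the local frame. When $W$ is a fusion Riesz basis, Theorem~\ref{Local theorem} tells us that $\mathcal F$ is in fact a Riesz basis for $\mathcal H$, and hence its canonical dual $\{\omega_i S_{\mathcal F}^{-1}f_{i,j}\}_{i\in I,\,j\in J_i}$ is also a Riesz basis for $\mathcal H$.

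First I would invoke Theorem~\ref{Local theorem} in the Riesz-basis direction: since $W$ is a fusion Riesz basis, the associated local frame $\mathcal F$ is a Riesz basis for $\mathcal H$. Next I would note that $\{\omega_i S_{\mathcal F}^{-1}f_{i,j}\}_{i\in I,\,j\in J_i}$, being the canonical dual of a Riesz basis, is itself a Riesz basis; in particular, for each fixed $i$, the subfamily $\{\omega_i S_{\mathcal F}^{-1}f_{i,j}\}_{j\in J_i}$ is a Riesz sequence and is a Riesz basis for $V_i$. Then I would apply Theorem~\ref{Local theorem} once more, this time to the family $\{V_i\}_{i\in I}$ with local (Riesz) bases $\{S_{\mathcal F}^{-1}f_{i,j}\}_{j\in J_i}$: because $\{\omega_i S_{\mathcal F}^{-1}f_{i,j}\}_{i\in I,\,j\in J_i}$ is a Riesz basis for $\mathcal H$, condition (ii) of that theorem holds, so condition (i) gives that $V=\{(V_i,\omega_i)\}_{i\in I}$ is a fusion Riesz basis. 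Finally, the $Q$-duality and non-canonicity of $V$ are already supplied verbatim by Proposition~\ref{dual Riesz non canonical}, so nothing further is needed there.

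The main obstacle is a bookkeeping one: verifying that the uniform bounds required by Theorem~\ref{Local theorem} (namely $0<\inf_i A_i\le\sup_i B_i<\infty$ for the local Riesz-basis constants of $\{V_i\}$) are genuinely inherited. This follows because the Riesz-basis constants of the canonical dual of $\mathcal F$ are controlled by the Riesz-basis constants of $\mathcal F$ together with the spectral bounds of $S_{\mathcal F}$, and the per-subspace constants of $\{S_{\mathcal F}^{-1}f_{i,j}\}_{j\in J_i}$ are in turn squeezed between these; one should spell this out carefully but it is routine. An alternative, perhaps cleaner route that avoids the bound-chasing entirely: since $\mathcal F$ is a Riesz basis, $S_{\mathcal F}$ is an invertible operator on $\mathcal H$, and one checks directly that $V_i=S_{\mathcal F}^{-1}W_i$, so $V=\{(S_{\mathcal F}^{-1}W_i,\omega_i)\}_{i\in I}$ is the image of the fusion Riesz basis $W$ under the invertible operator $S_{\mathcal F}^{-1}$; equivalent fusion frames have the same excess (as recalled in the paragraph preceding Definition~\ref{Q}), and it is standard that a fusion frame equivalent to a fusion Riesz basis is again a fusion Riesz basis. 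Either way, combining this with Proposition~\ref{dual Riesz non canonical} yields that the constructed $V$ is a non-canonical $Q$-dual fusion Riesz basis, which is exactly the claim.
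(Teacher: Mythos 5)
Your proposal is correct and matches the paper's route: the corollary is obtained exactly by specializing the construction of Proposition~\ref{dual Riesz non canonical} to a fusion Riesz basis $W$ and observing that the resulting $V$ (with $V_i=S_{\mathcal F}^{-1}W_i$, $\mathcal F$ now a Riesz basis) is again a fusion Riesz basis. Your second, ``cleaner'' justification via equivalence under the invertible operator $S_{\mathcal F}^{-1}$ (or, just as quickly, via $e(V)=e(W)=0$, which forces $T_V$ to be injective and hence $V$ to be a fusion Riesz basis) is exactly the level of detail the paper leaves implicit, and it avoids the uniform-bound bookkeeping of your first route.
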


By Lemmata 3.4 and 3.5 in \cite{Heineken}, it is proved that if $V=\{(V_{i},\upsilon_{i})\}_{i \in I}$ is a block-diagonal dual fusion frame of $W,$ then $\mathcal{A}M_{i}\left( \bigoplus_{i \in I} W_{i}\right) \subseteq V_{i}$, for each  $i\in I$, where $\mathcal{A} \in \mathcal {L}_{T_{W}^{*}}$. Moreover, the converse implication holds under some conditions. More precisely, if $\mathcal{A}M_{i}\left( \bigoplus_{i \in I} W_{i}\right)\subseteq V_{i}$ such that $V$ is a fusion Bessel sequence and 
\begin{equation*}
Q_{\mathcal{A}}: \bigoplus_{i \in I} W_{i}\rightarrow \bigoplus_{i \in I} V_{i},~~Q_{\mathcal{A}}\{f_{j}\}_{j \in I}=\left\lbrace \upsilon_{i} ^{-1}\mathcal{A}M_{i}\{f_{j}\}_{j \in I}\right\rbrace _{i \in I},
\end{equation*}
is a well-defined bounded operator, then $V$ is a $Q_{\mathcal{A}}$-block-diagonal dual fusion frame of $W.$ Hence, block-diagonal dual fusion frames are linked to the left inverses of the analysis operator $ T_{W}^{*} $. In what follows, by the $Q$-dual fusion frame we mean the $Q$-block-diagonal dual fusion frame. Furthermore, the linear transformation $p_{i}:\bigoplus_{i \in I} W_{i}\rightarrow W_{i}$ is defined by $p_{i}(\{f_{k}\}_{k \in I})=f_{i}$ for each $i \in I$. Given a fusion frame, we first derive a bound for the excess of its $Q$-dual fusion frame.
\begin{theorem}\label{bounds}
Let $W=\{(W_{i},\omega_{i})\}_{i \in I}$ be a fusion frame for $\mathcal {H}_{n}$ with a $Q_{\mathcal{A}}$-dual fusion frame $V=\{(V_{i},\upsilon_{i})\}_{i \in I},$ where $\mathcal{A} \in \mathcal {L}_{T_{W}^{*}}$. Then
\begin{equation*}
\big |e(V)-e(W)\big |=\bigg |\sum_{i \in I} \textnormal{dim}\big( V_{i}\cap N(p_{i}\mathcal{A}^{*})\big)-\sum_{i \in I} \textnormal{dim}N(\mathcal{A}p_{i}^{*})\bigg |.
\end{equation*}
Moreover, if $V$ is a component preserving dual fusion frame, then the right-hand side of the above identity can be summarized to
\begin{equation*}
\big |e(V)-e(W)\big |=\big |\textnormal{dim}N(Q_{\mathcal{A}}^{*})-\textnormal{dim}N(Q_{\mathcal{A}})\big |.
\end{equation*}
\end{theorem}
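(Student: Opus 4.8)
The plan is to reduce the statement to the finite-dimensional excess formula \eqref{finite dimensional} together with the rank--nullity theorem, applied coordinate by coordinate. Since a dual fusion frame is itself a fusion frame for $\mathcal{H}_n$, both $W$ and $V$ satisfy \eqref{finite dimensional}, hence $e(V)-e(W)=\sum_{i\in I}(\dim V_i-\dim W_i)$, and it is enough to control each difference $\dim V_i-\dim W_i$.

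First I would set up the coordinate maps. Writing $M_i=p_i^{*}p_i$, where $p_i^{*}\colon W_i\to\bigoplus_{i\in I}W_i$ is the isometric embedding onto the $i$-th coordinate, one has $M_i\bigl(\bigoplus_{i\in I}W_i\bigr)=R(p_i^{*})$, so that $R_i:=\mathcal{A}M_i\bigl(\bigoplus_{i\in I}W_i\bigr)=R(\mathcal{A}p_i^{*})$. Because $V$ is a block-diagonal ($Q_{\mathcal{A}}$-) dual fusion frame, the characterization recalled above (Lemmata 3.4 and 3.5 of \cite{Heineken}) gives $R_i\subseteq V_i$ for every $i\in I$. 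Moreover, $(\mathcal{A}p_i^{*})^{*}=p_i\mathcal{A}^{*}$, and the standard identity $N(T^{*})=R(T)^{\perp}$ yields $N(p_i\mathcal{A}^{*})=R_i^{\perp}$, the orthogonal complement being taken in $\mathcal{H}_n$.

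Next comes the dimension count. Rank--nullity for $\mathcal{A}p_i^{*}\colon W_i\to\mathcal{H}_n$ gives $\dim W_i=\dim R_i+\dim N(\mathcal{A}p_i^{*})$. On the other hand, since $R_i\subseteq V_i$ we may write $V_i=R_i\oplus(V_i\cap R_i^{\perp})=R_i\oplus\bigl(V_i\cap N(p_i\mathcal{A}^{*})\bigr)$, so $\dim V_i=\dim R_i+\dim\bigl(V_i\cap N(p_i\mathcal{A}^{*})\bigr)$. Subtracting, $\dim V_i-\dim W_i=\dim\bigl(V_i\cap N(p_i\mathcal{A}^{*})\bigr)-\dim N(\mathcal{A}p_i^{*})$; summing over $i$ and taking absolute values gives the first displayed identity.

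For the "moreover" part, component preservation means $V_i=R_i=R(\mathcal{A}p_i^{*})$, whence $V_i\cap N(p_i\mathcal{A}^{*})=R_i\cap R_i^{\perp}=\{0\}$ and the first sum on the right vanishes, leaving $|e(V)-e(W)|=\sum_{i\in I}\dim N(\mathcal{A}p_i^{*})$. To identify this with the $Q_{\mathcal{A}}$-kernels, I would read off from $Q_{\mathcal{A}}\{f_j\}_{j\in I}=\{\upsilon_i^{-1}\mathcal{A}p_i^{*}f_i\}_{i\in I}$ that $N(Q_{\mathcal{A}})=\bigoplus_{i\in I}N(\mathcal{A}p_i^{*})$, and from $Q_{\mathcal{A}}^{*}\{g_i\}_{i\in I}=\{\upsilon_i^{-1}p_i\mathcal{A}^{*}g_i\}_{i\in I}$ that $N(Q_{\mathcal{A}}^{*})=\bigoplus_{i\in I}\bigl(V_i\cap N(p_i\mathcal{A}^{*})\bigr)=\{0\}$; hence $\sum_{i\in I}\dim N(\mathcal{A}p_i^{*})=\dim N(Q_{\mathcal{A}})=\bigl|\dim N(Q_{\mathcal{A}}^{*})-\dim N(Q_{\mathcal{A}})\bigr|$. (Alternatively, one avoids this last step by applying rank--nullity directly to $Q_{\mathcal{A}}\colon\bigoplus_{i\in I}W_i\to\bigoplus_{i\in I}V_i$ and using $\dim R(Q_{\mathcal{A}})=\dim R(Q_{\mathcal{A}}^{*})$.) I expect the only delicate point to be the bookkeeping with the coordinate maps --- in particular checking $\mathcal{A}M_i\bigl(\bigoplus_{i\in I}W_i\bigr)=R(\mathcal{A}p_i^{*})\subseteq V_i$ and the adjoint identity $N(p_i\mathcal{A}^{*})=R(\mathcal{A}p_i^{*})^{\perp}$ --- rather than any substantial difficulty; once these are in place the result is pure rank--nullity.
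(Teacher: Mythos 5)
Your proof is correct and follows essentially the same route as the paper: both rest on the inclusion $R(\mathcal{A}p_i^{*})\subseteq V_i$ from block-diagonality, the relation $N(p_i\mathcal{A}^{*})=R(\mathcal{A}p_i^{*})^{\perp}$, rank--nullity for $\mathcal{A}p_i^{*}$, the excess formula \eqref{finite dimensional}, and the identifications $N(Q_{\mathcal{A}})=\bigoplus_i N(\mathcal{A}p_i^{*})$, $N(Q_{\mathcal{A}}^{*})=\bigoplus_i\bigl(V_i\cap N(p_i\mathcal{A}^{*})\bigr)$. The only (immaterial) difference is that you obtain $\dim\bigl(V_i\cap R_i^{\perp}\bigr)=\dim V_i-\dim R_i$ by decomposing $V_i=R_i\oplus(V_i\cap R_i^{\perp})$ directly, whereas the paper derives the same count via $\dim\bigl(V_i^{\perp}+R_i\bigr)^{\perp}$; your added remark that $N(Q_{\mathcal{A}}^{*})=\{0\}$ in the component-preserving case is a correct refinement the paper does not state.
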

\begin{proof}
Since $V$ is a $Q_{\mathcal{A}}$-dual fusion frame of $W,$ we observe that
\begin{equation*}
V_{i}^{\perp}\cap R(\mathcal{A}p_{i}^{*})\subseteq V_{i}^{\perp}\cap V_{i}=\{0\},~(i \in I).
\end{equation*}
Therefore, we get
\begin{align*}
\bigg |\sum_{i \in I} \textnormal{dim}\big(V_{i}&\cap N(p_{i}\mathcal{A}^{*})\big)-\sum_{i \in I} \textnormal{dim}N(\mathcal{A}p_{i}^{*})\bigg |
\\&=\bigg |\sum_{i \in I} \textnormal{dim}\left(V_{i}\cap R(\mathcal{A}p_{i}^{*})^\perp\right)-\sum_{i \in I} \textnormal{dim}N(\mathcal{A}p_{i}^{*})\bigg |
\\&=\bigg |\sum_{i \in I} \textnormal{dim}\left(V_{i}^\perp + R(\mathcal{A}p_{i}^{*})\right)^\perp-\sum_{i \in I} \textnormal{dim}N(\mathcal{A}p_{i}^{*})\bigg |
\\&=\bigg |\sum_{i \in I} \textnormal{dim}\mathcal {H}_{n}-\sum_{i \in I} \textnormal{dim}\left(V_{i}^\perp + R(\mathcal{A}p_{i}^{*})\right)-\sum_{i \in I} \textnormal{dim}N(\mathcal{A}p_{i}^{*})\bigg |
\\&=\bigg |\sum_{i \in I}\textnormal{dim}V_{i} - \sum_{i \in I} \textnormal{dim}R(\mathcal{A}p_{i}^{*}) -\sum_{i \in I} \textnormal{dim}N(\mathcal{A}p_{i}^{*})\bigg |
\\&=\bigg |\sum_{i \in I}\textnormal{dim}V_{i}-\sum_{i \in I}\textnormal{dim}W_{i}\bigg |=\big |e(V)-e(W)\big |.
\end{align*}
For the moreover part, if $V$ is a component preserving dual fusion frame, then the desired result is obtained from the fact that
\begin{align*}
N(Q_{\mathcal{A}}^{*})&=\left\lbrace \{g_{i}\}_{i \in I}\in \bigoplus_{i \in I}V_{i}:~ \sum_{i\in I}\upsilon _{i}^{-1}M_{i}\mathcal{A}^{*}g_{i}=0\right\rbrace 
\\&=\left\lbrace \{g_{i}\}_{i \in I}\in \bigoplus_{i \in I}V_{i}:~ p_{i}\mathcal{A}^{*}g_{i}=0,~ \forall ~ i \in I \right\rbrace 
\\&=\bigoplus_{i \in I}\big( V_{i}\cap N(p_{i}\mathcal{A}^{*})\big), 
\end{align*}
and similarly $N(Q_{\mathcal{A}})=\bigoplus_{i \in I}N(\mathcal{A}p_{i}^{*})$.
\end{proof}\color{black}
As a special case of Theorem \ref{bounds}, we get the following result.
\begin{corollary}\label{Gavruta bound}
Assume that $V,$ considered in Theorem \ref{bounds}, is a G\~{a}vru\c{t}a dual of $W,$ then
\begin{equation}\label{ssss}
\begin{aligned}
\big |e(V)-e(W)\big |&=\bigg |\sum_{i \in I} \textnormal{dim}\big(V_{i}\cap S_{W}W_{i}^{\perp}\big) -\sum_{i \in I} \textnormal{dim}\big(V_{i}+S_{W}W_{i}^{\perp}\big)^{\perp}\bigg |
\\&= \big |\textnormal{dim}N(\varphi_{VW}^{*})-\textnormal{dim}N(\varphi_{VW})\big |.
\end{aligned}
\end{equation}
Moreover, if $V_{i}\supseteq \widetilde{W}_{i}$ for all $i\in I$, then 
\begin{align*}
\big |e(V)-e(W)\big |=\sum_{i \in I} \textnormal{dim}\left( V_{i}\cap S_{W}W_{i}^{\perp}\right) .
\end{align*}
\end{corollary}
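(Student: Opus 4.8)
The plan is to specialize Theorem \ref{bounds} to the G\~{a}vru\c{t}a case by fixing the left inverse $\mathcal{A}$ appropriately and then rewriting every null space and range occurring there in terms of $S_{W}$, $W_{i}$ and $V_{i}$. Since $V$ is a G\~{a}vru\c{t}a dual of $W$, it is a block-diagonal dual with $Q=\varphi_{VW}$, and the defining relation of a G\~{a}vru\c{t}a dual is precisely $T_{V}\varphi_{VW}T_{W}^{*}=I_{\mathcal{H}}$. So I would set $\mathcal{A}:=T_{V}\varphi_{VW}$, note that $\mathcal{A}\in\mathcal{L}_{T_{W}^{*}}$, and check by computing $\mathcal{A}M_{i}$ that $Q_{\mathcal{A}}=\varphi_{VW}$; hence $V$ is the $Q_{\mathcal{A}}$-dual of $W$ for this particular $\mathcal{A}$ and Theorem \ref{bounds} applies verbatim.

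Next I would evaluate the operators appearing in Theorem \ref{bounds}. From $\mathcal{A}^{*}=\varphi_{VW}^{*}T_{V}^{*}$ and the self-adjointness of $S_{W}^{-1}$ one gets $p_{i}\mathcal{A}^{*}f=\upsilon_{i}\,\pi_{W_{i}}S_{W}^{-1}\pi_{V_{i}}f$ and $\mathcal{A}p_{i}^{*}g=\upsilon_{i}\,\pi_{V_{i}}S_{W}^{-1}g$ for $g\in W_{i}$. Restricting the first to $V_{i}$ gives $V_{i}\cap N(p_{i}\mathcal{A}^{*})=\{f\in V_{i}:\pi_{W_{i}}S_{W}^{-1}f=0\}$, and the elementary identity $(S_{W}^{-1}W_{i})^{\perp}=S_{W}W_{i}^{\perp}$, valid because $S_{W}$ is invertible and self-adjoint, turns this into $V_{i}\cap N(p_{i}\mathcal{A}^{*})=V_{i}\cap S_{W}W_{i}^{\perp}$. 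For the other term, $R(\mathcal{A}p_{i}^{*})=\pi_{V_{i}}S_{W}^{-1}W_{i}$, so running the same rank count as in the proof of Theorem \ref{bounds} yields $\dim N(\mathcal{A}p_{i}^{*})=\dim W_{i}-\dim R(\mathcal{A}p_{i}^{*})=\dim(V_{i}+S_{W}W_{i}^{\perp})^{\perp}$; equivalently, $N(\mathcal{A}p_{i}^{*})=W_{i}\cap S_{W}V_{i}^{\perp}$, which $S_{W}^{-1}$ carries bijectively onto $V_{i}^{\perp}\cap S_{W}^{-1}W_{i}=(V_{i}+S_{W}W_{i}^{\perp})^{\perp}$. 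Substituting these two identities into Theorem \ref{bounds} gives the first displayed equality.

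For the second equality I would use that $\varphi_{VW}$, being block-diagonal, splits as $\bigoplus_{i\in I}Q_{i}$ with $Q_{i}=\pi_{V_{i}}S_{W}^{-1}|_{W_{i}}\colon W_{i}\to V_{i}$, so that $N(\varphi_{VW})=\bigoplus_{i}N(Q_{i})$ and $N(\varphi_{VW}^{*})=\bigoplus_{i}N(Q_{i}^{*})$. Since $N(Q_{i})=N(\mathcal{A}p_{i}^{*})$ and, computing the inner product, $Q_{i}^{*}h=\pi_{W_{i}}S_{W}^{-1}h$ for $h\in V_{i}$, so that $N(Q_{i}^{*})=V_{i}\cap S_{W}W_{i}^{\perp}=V_{i}\cap N(p_{i}\mathcal{A}^{*})$, taking dimensions and comparing with Theorem \ref{bounds} gives $\big|e(V)-e(W)\big|=\big|\dim N(\varphi_{VW}^{*})-\dim N(\varphi_{VW})\big|$. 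For the moreover part, assume $\widetilde{W}_{i}=S_{W}^{-1}W_{i}\subseteq V_{i}$ for every $i$; then for $g\in W_{i}$ we have $S_{W}^{-1}g\in V_{i}$, hence $\pi_{V_{i}}S_{W}^{-1}g=S_{W}^{-1}g$, which is nonzero unless $g=0$. Thus $N(\mathcal{A}p_{i}^{*})=\{0\}$, equivalently $(V_{i}+S_{W}W_{i}^{\perp})^{\perp}=\{0\}$, for all $i$, so the subtracted sum in the first equality drops out and $\big|e(V)-e(W)\big|=\sum_{i\in I}\dim(V_{i}\cap S_{W}W_{i}^{\perp})$.

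The step I expect to be the main obstacle is the bookkeeping in the second paragraph: keeping straight which subspace lives in $\mathcal{H}_{n}$, in $W_{i}$, or in $V_{i}$, and applying the orthogonal-complement identities for the invertible, self-adjoint but non-idempotent operator $S_{W}$ correctly, so that $\dim N(\mathcal{A}p_{i}^{*})$ is matched with $\dim(V_{i}+S_{W}W_{i}^{\perp})^{\perp}$ and not with some off-by-a-subspace quantity; once those identifications are pinned down, the rest is a direct substitution into Theorem \ref{bounds}.
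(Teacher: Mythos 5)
Your proposal is correct and follows essentially the same route as the paper: take $\mathcal{A}=T_{V}\varphi_{VW}$, compute $\mathcal{A}p_{i}^{*}$ and $p_{i}\mathcal{A}^{*}$ explicitly, and convert the kernels in Theorem \ref{bounds} into subspace intersections via $(S_{W}^{-1}W_{i})^{\perp}=S_{W}W_{i}^{\perp}$, with the moreover part reducing to $N(\mathcal{A}p_{i}^{*})=\{0\}$ when $\widetilde{W}_{i}\subseteq V_{i}$ (the paper phrases this as $(V_{i}+S_{W}W_{i}^{\perp})^{\perp}=V_{i}^{\perp}\cap\widetilde{W}_{i}=\{0\}$, which is the same fact). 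Your explicit block-diagonal decomposition of $\varphi_{VW}$ for the second equality only spells out what the paper leaves implicit from the "moreover" computation in Theorem \ref{bounds}.
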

\begin{proof}
Due to $V$ is a G\~{a}vru\c{t}a dual of $W,$ we have $\mathcal{A}:=T_{V}\varphi_{VW} \in \mathcal {L}_{T_{W}^{*}}$. Hence,
\begin{align*}
\mathcal{A}p_{i}^{*}\pi_{W_{i}}f&=T_{V}\varphi_{VW}p_{i}^{*}\pi_{W_{i}}f
\\&=T_{V}\varphi_{VW}\left\lbrace \delta_{k,i}\pi_{W_{k}}f\right\rbrace _{k \in I}
\\&=T_{V}\left\lbrace \delta_{k,i}\pi_{V_{k}}S_{W}^{-1}\pi_{W_{k}}f\right\rbrace _{k \in I}
\\&=\upsilon_{i} \pi_{V_{i}}S_{W}^{-1}\pi_{W_{i}}f,
\end{align*}
and similarly $p_{i}\mathcal{A}^{*}f=\upsilon_{i} \pi_{W_{i}}S_{W}^{-1}\pi_{V_{i}}f,$ for all $f \in \mathcal {H}$. Therefore, \eqref{ssss} follows from Theorem \ref{bounds} and noting the fact that
\begin{align*}
\sum_{i \in I} \textnormal{dim}\big(W_{i}\cap S_{W}V_{i}^{\perp}\big)&=\sum_{i \in I} \textnormal{dim}S_{W}\left( V_{i}^{\perp}\cap S_{W}^{-1}W_{i}\right) 
\\&=\sum_{i \in I} \textnormal{dim}\left(V_{i}^{\perp}\cap \big(S_{W}W_{i}^{\perp}\big)^{\perp}\right) 
\\&=\sum_{i \in I} \textnormal{dim}\left(V_{i}+S_{W}W_{i}^{\perp}\right)^{\perp}.
\end{align*}
For the moreover part, if $ V_{i}\supseteq \widetilde{W}_{i}$ for all $i\in I$, then it follows
\begin{equation*}
\sum_{i \in I} \textnormal{dim}\left( V_{i}+S_{W}W_{i}^{\perp}\right)^{\perp}=\sum_{i \in I} \textnormal{dim}\left( V_{i}^{\perp}\cap \widetilde{W}_{i}\right)=0,
\end{equation*}
which completes the proof. 
\end{proof}
\begin{Remark}\label{Riesz dual}
Suppose that $W$ is a fusion Riesz basis for $\mathcal {H}_{n}$. In \cite{Arabyani dual}, it has been proved that a Bessel sequence $V=\{(V_{i},\omega_{i})\}_{i \in I}$ is a G\~{a}vru\c{t}a dual fusion frame of $W$ if and only if $ V_{i}\supseteq \widetilde{W}_{i},$ for all $i\in I$. Hence, Corollary \ref{Gavruta bound} ensures that $e(V)=\sum_{i \in I} \textnormal{dim}\big(V_{i}\cap S_{W}W_{i}^{\perp}\big)$ for every G\~{a}vru\c{t}a dual $V$ of $W$. Overall, it can be concluded that
\begin{equation*} 
0 \leq e(V) \leq \sum_{i \in I} \textnormal{dim}W_{i}^\perp .
\end{equation*}
In particular, in the case $|I|=2$, the excess of every G\~{a}vru\c{t}a dual is $\leq \textnormal{dim}\mathcal {H}_{n},$ by Lemma \ref{excess of risze}.
\end{Remark}

\begin{example}\label{poi}
\begin{itemize}
\item[(1)] Take $W_{1}=\textnormal{span} \{(1,1,0)\}$ and $W_{2}=\{0\}\times \Bbb R^{2}.$ Then $W=\{(W_{i},\omega_{i})\}_{i=1}^2$ is a fusion Riesz basis for $\mathcal {H}=\Bbb R^{3} $. An easy computation shows that
\begin{equation*}
S_{W}=\frac{1}{2}
\begin{pmatrix}
   \omega_{1}^{2}  &  \omega_{1}^{2} & 0\\
    \omega_{1}^{2}  &  \omega_{1}^{2}+ 2\omega_{2}^{2} & 0 \\
   0  & 0  & 2\omega_{2}^{2}
\end{pmatrix}.
\end{equation*}
Moreover, $S_{W}W_{1}^{\perp}=W_{2}$ and $S_{W}W_{2}^{\perp}=W_{1}.$ Now consider the G\~{a}vru\c{t}a dual fusion frame $V=\{(V_{i},\omega_{i})\}_{i=1}^2$ of $W,$ where $V_{1}=\Bbb R^{2}\times\{0\}$ and $V_{2}=\mathcal {H}$. By Remark \ref{Riesz dual}, it implies that $e(V)=\sum_{i=1}^{2} \textnormal{dim}\big(V_{i}\cap S_{W}W_{i}^{\perp}\big)=2.$

\item[(2)] Consider the fusion frame $W=\{(W_{i},\omega)\}_{i=1}^2$ introduced in Example \ref{excess of fusion frames}(4) with $e(W)=1$. Take $V_{1}=\overline{\textnormal{span}} \{e_{i}\}_{i\geq -n}$ and $V_{2}=\overline{\textnormal{span}} \{e_{i}\}_{i\leq m}$, for every $n,m \in \Bbb{N}$. Then $V=\{(V_{i},\omega)\}_{i=1}^2$ is a G\~{a}vru\c{t}a dual fusion frame of $W$ such that
$S_{V}^{-1}f=\omega^{-2}\left( f-\sum_{i=-n}^{m}\langle f,e_{i} \rangle \frac{e_{i}}{2}\right) $, for all $f \in \mathcal {H}$. It follows from Theorem \ref{th} that
\begin{align*}
e(V)&=\sum_{i\in \Bbb Z}\left( 1-\omega^{2}\left\langle  e_{i},S_{V}^{-1}e_{i} \right\rangle \right) +\sum_{i=-n}^{m}\left( 1-\omega^{2}\left\langle  e_{i},S_{V}^{-1}e_{i}\right\rangle \right) 
\\&=\sum_{i\in \Bbb Z}\sum_{j=-n}^{m}\left\langle  e_{i},\frac{e_{j}}{2}\right\rangle +\sum_{i=-n}^{m}\sum_{j=-n}^{m}\left\langle  e_{i},\frac{e_{j}}{2}\right\rangle =n+m+1.
\end{align*}
Furthermore, it is not difficult to see that $V_{i}\supseteq \widetilde{W}_{i},$ for all $i\in I$ and 
\begin{equation*}
\sum_{i=1}^2 \textnormal{dim}\big(V_{i}\cap S_{W}W_{i}^{\perp}\big)=n+m.
\end{equation*} 
Thus, we derive
\begin{align*}
e(V)-e(W)&=(n+m+1)-1 
\\&=\sum_{i=1}^2 \textnormal{dim}\big(V_{i}\cap S_{W}W_{i}^{\perp}\big).
\end{align*}

\item[(3)] Consider the fusion frame $W=\{(W_{i},\omega)\}_{i=1}^\infty$ introduced in Example \ref{excess of fusion frames}(3) with the fusion frame operator
\begin{equation*}
S_{W}=\textnormal{diag}\bigg( \omega ^{2},\underbrace{2\omega ^{2}, 2\omega ^{2},\ldots, 2\omega ^{2}}_{n-1}, \omega ^{2}, \omega ^{2}, \ldots \bigg) .
\end{equation*} 
Take $m\in \Bbb{N}$ and
\begin{equation*}
V_{i}=\begin{cases}
\textnormal{span}\{e_{i},\ldots ,e_{i+m}\},~& 1 \leq i \leq n,
\\
\textnormal{span}\{e_{i+1}\},~& i > n.
\end{cases}
\end{equation*}
It is easily seen that $V=\{(V_{i},\omega)\}_{i=1}^\infty$ is a G\~{a}vru\c{t}a dual fusion frame of $W$ such that $V_{i}\supseteq \widetilde{W}_{i},$ for all $i\in I$. Thus, it follows from Corollary \ref{Gavruta bound} that
\begin{align*}
e(V)&=\sum_{i \in I} \textnormal{dim}\big(V_{i}\cap S_{W}W_{i}^{\perp}\big)+e(W)
\\&=n(m-1)+(n-1)=nm-1.
\end{align*}
\end{itemize}
\end{example}
\begin{example}
Assume that $\{e_{i}\}_{i=1}^n$ is an orthonormal basis for $\mathcal {H}_{n}$, $n\geq 2$ and $\{\omega_{i}\}_{i \in I}, \{\upsilon_{i}\}_{i \in I}$ are family of weights. Consider $W_{2i-1}=W_{2i}=\textnormal{span}\{e_{i},e_{i+1}\}$ for $i=1,\ldots,n-1$. Then $W=\left\lbrace W_{2i-1},W_{2i}\right\rbrace_{i=1}^{n-1}$ is a fusion frame for $\mathcal {H}_{n}$ with respect to $\{\omega_{i}\}_{i \in I}$ such that $e(W)=3n-4,$ by \eqref{finite dimensional}. Moreover, $V=\left\lbrace W_{2i-1},W_{2i}^{\perp}\right\rbrace_{i=1}^{n-1}$ is a G\~{a}vru\c{t}a dual fusion frame of $W$ with respect to $\{\upsilon_{i}\}_{i \in I}$. Straightforward calculations show that
\begin{align*}
\sum_{i=1}^{n-1}\sum_{j=0}^1 \textnormal{dim}\left(V_{2i-j}\cap S_{W}W_{2i-j}^{\perp}\right) &=\sum_{i=1}^{n-1} \textnormal{dim}\left( W_{2i}^{\perp}\cap S_{W}W_{2i}^{\perp}\right) 
\\&=(n-1)(n-2),
\end{align*}
and similarly $\sum_{i=1}^{n-1}\sum_{j=0}^1 \textnormal{dim}\big(V_{2i-j}+S_{W}W_{2i-j}^{\perp}\big)^{\perp}=2(n-1)$. Therefore, it follows from Corollary \ref{Gavruta bound} that
\begin{align*}
e(V)&=(n-1)(n-2)-2(n-1)+e(W)
\\&=(n-1)(n-4)+3n-4=n(n-2).
\end{align*}
\end{example}

\begin{theorem}
Let $W=\{(W_{i},\omega_{i})\}_{i \in I}$ be a fusion frame for $\mathcal {H}_{n}$ with a $Q_{\mathcal{A}}$-dual fusion frame $V=\{(V_{i},\upsilon_{i})\}_{i \in I},$ where $\mathcal{A} \in \mathcal {L}_{T_{W}^{*}}$. Then
\begin{align*}
e(V)+\textnormal{dim} R(Q_{\mathcal{A}}T_{W}^{*}T_{V})&=\textnormal{dim}\big( N(Q_{\mathcal{A}}^{*})\cap N(T_{W}Q_{\mathcal{A}}^{*})\big)  
\\&+\textnormal{dim}\big( R(Q_{\mathcal{A}}^{*})\cap N(T_{W})\big)  .
\end{align*}
\end{theorem}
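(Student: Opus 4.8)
The plan is to reduce the identity to finite-dimensional linear algebra on the spaces $\bigoplus_{i\in I}W_i$ and $\bigoplus_{i\in I}V_i$, both finite-dimensional here, and to use throughout the defining relation $T_V Q_{\mathcal{A}}T_W^{*}=I_{\mathcal{H}_n}$ of a $Q_{\mathcal{A}}$-dual, its adjoint form $T_W Q_{\mathcal{A}}^{*}T_V^{*}=I_{\mathcal{H}_n}$, the rank--nullity theorem, and the elementary identity $\dim L^{-1}(M)=\dim N(L)+\dim\bigl(R(L)\cap M\bigr)$, valid for any linear map $L$ and any subspace $M$ of its codomain.

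For the left-hand side I would introduce $E:=Q_{\mathcal{A}}T_W^{*}T_V$ on $\bigoplus_{i\in I}V_i$. From $T_V Q_{\mathcal{A}}T_W^{*}=I_{\mathcal{H}_n}$ one gets $E^{2}=E$ at once, so $E$ is an (in general oblique) idempotent and $\bigoplus_{i\in I}V_i=R(E)\oplus N(E)$ as an algebraic direct sum. The crucial observation is $N(E)=N(T_V)$: the inclusion $N(T_V)\subseteq N(E)$ is trivial, and if $Ex=0$ then applying $T_V$ and invoking $T_V Q_{\mathcal{A}}T_W^{*}=I_{\mathcal{H}_n}$ forces $T_V x=0$. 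Hence $\dim N(E)=e(V)$, so the direct-sum decomposition gives $e(V)+\dim R(Q_{\mathcal{A}}T_W^{*}T_V)=\dim\bigoplus_{i\in I}V_i$; moreover $\dim R(Q_{\mathcal{A}}T_W^{*}T_V)$ can itself be pinned down, since $T_V$ is onto and $Q_{\mathcal{A}}T_W^{*}$ is one-to-one (it has the left inverse $T_V$).

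For the right-hand side I would first note that $N(Q_{\mathcal{A}}^{*})\subseteq N(T_W Q_{\mathcal{A}}^{*})$, so the first summand collapses to $\dim N(Q_{\mathcal{A}}^{*})$. Since $N(T_W Q_{\mathcal{A}}^{*})=(Q_{\mathcal{A}}^{*})^{-1}\bigl(N(T_W)\bigr)$, the preimage identity with $L=Q_{\mathcal{A}}^{*}$ and $M=N(T_W)$ rewrites the entire right-hand side as $\dim N(T_W Q_{\mathcal{A}}^{*})$. It then remains to match this with the left-side count; for that I would use that $T_W Q_{\mathcal{A}}^{*}$ is surjective onto $\mathcal{H}_n$ — which is exactly the content of $T_W Q_{\mathcal{A}}^{*}T_V^{*}=I_{\mathcal{H}_n}$ — together with rank--nullity on $\bigoplus_{i\in I}V_i$.

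The step I expect to be the main obstacle is this final reconciliation of the two dimension counts: one must keep careful track of which ranks and nullities live in $\bigoplus_{i\in I}W_i$ and which in $\bigoplus_{i\in I}V_i$, and — since $E$ is generally not self-adjoint, so that $R(E)\ne N(E)^{\perp}$ — every complement has to be taken algebraically rather than orthogonally. All the preliminary steps (the relation $E^{2}=E$, the identification $N(E)=N(T_V)$, the collapse of the first summand, and the preimage formula) are short and purely formal.
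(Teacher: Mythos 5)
Your individual steps are all correct, but they cannot be assembled into a proof, because together they show that the two sides of the claimed identity are \emph{never} equal under the stated hypotheses. On the left, your idempotent $E=Q_{\mathcal{A}}T_W^{*}T_V$ satisfies $N(E)=N(T_V)$ and $\dim R(E)=\dim R(Q_{\mathcal{A}}T_W^{*})=\dim\mathcal{H}_n$ (injectivity of $Q_{\mathcal{A}}T_W^{*}$ via its left inverse $T_V$, surjectivity of $T_V$), so $e(V)+\dim R(Q_{\mathcal{A}}T_W^{*}T_V)=\dim\bigoplus_{i\in I}V_i$. On the right, your collapse of the first summand together with the preimage formula gives exactly $\dim N(T_WQ_{\mathcal{A}}^{*})$, and since $T_WQ_{\mathcal{A}}^{*}T_V^{*}=I_{\mathcal{H}_n}$ makes $T_WQ_{\mathcal{A}}^{*}$ surjective, this equals $\dim\bigoplus_{i\in I}V_i-\dim\mathcal{H}_n$. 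So the ``final reconciliation'' you defer is not an obstacle to be overcome but an impossibility: the left side exceeds the right side by exactly $\dim\mathcal{H}_n$. A minimal counterexample is $W=V$ the fusion orthonormal basis $W_i=\textnormal{span}\{e_i\}$ of $\mathcal{H}_n$ with $Q_{\mathcal{A}}$ the identity, where the left side is $n$ and the right side is $0$.

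The discrepancy originates in the paper's own proof, which applies rank--nullity to $(I_{\oplus V_i}-E)$ restricted to $N(T_WQ_{\mathcal{A}}^{*})$ but records the kernel term as $\dim N(I_{\oplus V_i}-E)$ rather than $\dim\big(N(I_{\oplus V_i}-E)\cap N(T_WQ_{\mathcal{A}}^{*})\big)$. Since $N(I_{\oplus V_i}-E)=R(E)=R(Q_{\mathcal{A}}T_W^{*})$ has dimension $\dim\mathcal{H}_n$ and is never contained in $N(T_WQ_{\mathcal{A}}^{*})$ (that inclusion would force $T_WQ_{\mathcal{A}}^{*}Q_{\mathcal{A}}T_W^{*}=(Q_{\mathcal{A}}T_W^{*})^{*}(Q_{\mathcal{A}}T_W^{*})=0$, hence $Q_{\mathcal{A}}T_W^{*}=0$, contradicting $T_VQ_{\mathcal{A}}T_W^{*}=I_{\mathcal{H}_n}$), the paper's intermediate identity $e(V)+\dim R(Q_{\mathcal{A}}T_W^{*}T_V)=\dim N(T_WQ_{\mathcal{A}}^{*})$ already fails. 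What your computation actually proves is the corrected statement $e(V)=\dim\big(N(Q_{\mathcal{A}}^{*})\cap N(T_WQ_{\mathcal{A}}^{*})\big)+\dim\big(R(Q_{\mathcal{A}}^{*})\cap N(T_W)\big)$, equivalently the printed identity with $\dim\mathcal{H}_n$ added to the right-hand side; you should state and prove that version instead.
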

\begin{proof}
Since $V$ is a $Q_{\mathcal{A}}$-dual fusion frame of $W,$ thus $T_{V}Q_{\mathcal{A}}T_{W}^{*}=I_{\mathcal {H}}.$ Applying Lemma 2.1 in \cite{Bakic} implies that 
\begin{equation*}
N(T_{V})=(I_{\oplus V_{i}}-Q_{\mathcal{A}}T_{W}^{*}T_{V})N(T_{W}Q_{\mathcal{A}}^{*}).
\end{equation*}
Moreover, using the rank-nullity theorem, we have
\begin{align*}
\textnormal{dim}N(T_{W}Q_{\mathcal{A}}^{*})&=\textnormal{dim}(I_{\oplus V_{i}}-Q_{\mathcal{A}}T_{W}^{*}T_{V})N(T_{W}Q_{\mathcal{A}}^{*}) 
\\&+\textnormal{dim}N(I_{\oplus V_{i}}-Q_{\mathcal{A}}T_{W}^{*}T_{V}).
\end{align*}
Therefore, $e(V)+\textnormal{dim}R(Q_{\mathcal{A}}T_{W}^{*}T_{V})=\textnormal{dim}N(T_{W}Q_{\mathcal{A}}^{*})$. On the other hand,
\begin{align*}
\textnormal{dim}N(T_{W}Q_{\mathcal{A}}^{*})&=\textnormal{dim}N\left( Q_{\mathcal{A}}^{*}\restriction_{N(T_{W}Q_{\mathcal{A}}^{*})}\right) + \textnormal{dim}R\left( Q_{\mathcal{A}}^{*}\restriction_{N(T_{W}Q_{\mathcal{A}}^{*})}\right) 
\\&= \textnormal{dim}\big(N(Q_{\mathcal{A}}^{*})\cap N(T_{W}Q_{\mathcal{A}}^{*})\big)+\textnormal{dim}\big(R(Q_{\mathcal{A}}^{*})\cap N(T_{W})\big).
\end{align*}
And consequently we get
\begin{align*}
e(V)+\textnormal{dim}R(Q_{\mathcal{A}}T_{W}^{*}T_{V})&=\textnormal{dim}N(T_{W}Q_{\mathcal{A}}^{*})
\\&= \textnormal{dim}\big(N(Q_{\mathcal{A}}^{*})\cap N(T_{W}Q_{\mathcal{A}}^{*})\big)
\\&+\textnormal{dim}\big(R(Q_{\mathcal{A}}^{*})\cap N(T_{W})\big).
\end{align*}
\end{proof}

Now, let us restrict our attention to the G\~{a}vru\c{t}a dual fusion frames. We obtain the next result, which is proved in a similar way.
\begin{corollary}
Let $W=\{(W_{i},\omega_{i})\}_{i \in I}$ be a fusion frame for $\mathcal {H}_n$ with a G\~{a}vru\c{t}a dual $V=\{(V_{i},\upsilon_{i})\}_{i \in I}$. Then
\begin{align*}
e(V)+\textnormal{dim}R(\varphi_{VW}T_{W}^{*}T_{V})&= \textnormal{dim}\big(N(\varphi_{VW}^{*})\cap N(T_{W}\varphi_{VW}^{*})\big)
\\&+\textnormal{dim}\big(R(\varphi_{VW}^{*})\cap N(T_{W})\big).
\end{align*}
\end{corollary}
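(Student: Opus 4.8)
The plan is to obtain this corollary as a direct specialization of the preceding theorem, by choosing the left inverse $\mathcal{A}$ appropriately. The first thing I would recall is the observation made in the proof of Corollary~\ref{Gavruta bound}: whenever $V=\{(V_{i},\upsilon_{i})\}_{i\in I}$ is a G\~avru\c ta dual of $W$, the operator $\mathcal{A}:=T_{V}\varphi_{VW}$ belongs to $\mathcal{L}_{T_{W}^{*}}$, since the defining relation $f=\sum_{i\in I}\omega_{i}\upsilon_{i}\pi_{V_{i}}S_{W}^{-1}\pi_{W_{i}}f$ is precisely $T_{V}\varphi_{VW}T_{W}^{*}=I_{\mathcal{H}}$. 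Hence $V$ is a $Q_{\mathcal{A}}$-dual fusion frame of $W$ for this particular $\mathcal{A}$, and the preceding theorem applies.

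The key step is then to identify the operator $Q_{\mathcal{A}}$ attached to this choice of $\mathcal{A}$, and to check that it is nothing but $\varphi_{VW}$ itself. Using $M_{i}\{f_{j}\}_{j\in I}=\{\delta_{i,j}f_{j}\}_{j\in I}$ together with the definition $\varphi_{VW}(\{f_{i}\}_{i\in I})=\{\pi_{V_{i}}S_{W}^{-1}f_{i}\}_{i\in I}$, one computes $\mathcal{A}M_{i}\{f_{j}\}_{j\in I}=T_{V}\{\delta_{k,i}\pi_{V_{k}}S_{W}^{-1}f_{k}\}_{k\in I}=\upsilon_{i}\pi_{V_{i}}S_{W}^{-1}f_{i}$, so that $Q_{\mathcal{A}}\{f_{j}\}_{j\in I}=\{\upsilon_{i}^{-1}\mathcal{A}M_{i}\{f_{j}\}_{j\in I}\}_{i\in I}=\{\pi_{V_{i}}S_{W}^{-1}f_{i}\}_{i\in I}=\varphi_{VW}\{f_{j}\}_{j\in I}$. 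Once this identification $Q_{\mathcal{A}}=\varphi_{VW}$ is in hand, the proof finishes by pure substitution: replacing $Q_{\mathcal{A}}$ by $\varphi_{VW}$ everywhere in the theorem's identity $e(V)+\textnormal{dim}R(Q_{\mathcal{A}}T_{W}^{*}T_{V})=\textnormal{dim}\big(N(Q_{\mathcal{A}}^{*})\cap N(T_{W}Q_{\mathcal{A}}^{*})\big)+\textnormal{dim}\big(R(Q_{\mathcal{A}}^{*})\cap N(T_{W})\big)$ yields exactly the claimed equality, with no extra hypotheses needed since $\mathcal{H}_{n}$ is already finite dimensional.

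Alternatively, I could reprove the statement from scratch, mimicking the theorem's argument line by line: start from $T_{V}\varphi_{VW}T_{W}^{*}=I_{\mathcal{H}}$, invoke \cite[Lemma~2.1]{Bakic} to get $N(T_{V})=(I_{\oplus V_{i}}-\varphi_{VW}T_{W}^{*}T_{V})N(T_{W}\varphi_{VW}^{*})$, then apply the rank–nullity theorem twice — once through $I_{\oplus V_{i}}-\varphi_{VW}T_{W}^{*}T_{V}$ to rewrite $\textnormal{dim}N(T_{W}\varphi_{VW}^{*})$ as $e(V)+\textnormal{dim}R(\varphi_{VW}T_{W}^{*}T_{V})$, and once through $\varphi_{VW}^{*}\!\restriction_{N(T_{W}\varphi_{VW}^{*})}$ to rewrite it as $\textnormal{dim}\big(N(\varphi_{VW}^{*})\cap N(T_{W}\varphi_{VW}^{*})\big)+\textnormal{dim}\big(R(\varphi_{VW}^{*})\cap N(T_{W})\big)$ — and equate. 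I do not expect a genuine obstacle: the only point that needs a little care is verifying that the generic recipe $Q_{\mathcal{A}}\{f_{j}\}_{j}=\{\upsilon_{i}^{-1}\mathcal{A}M_{i}\{f_{j}\}_{j}\}_{i}$ collapses to $\varphi_{VW}$ for the G\~avru\c ta choice $\mathcal{A}=T_{V}\varphi_{VW}$, after which the finite-dimensional dimension bookkeeping transfers verbatim.
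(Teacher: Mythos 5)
Your proposal is correct and matches the paper's intent: the paper itself only remarks that the corollary "is proved in a similar way" to the preceding theorem, and your identification $Q_{\mathcal{A}}=\varphi_{VW}$ for $\mathcal{A}=T_{V}\varphi_{VW}$ (already noted in the paper after Definition~\ref{Q}, where G\~avru\c ta duals are identified as block-diagonal duals with $Q=\varphi_{VW}$) makes the specialization explicit, while your alternative line-by-line rerun of the theorem's argument is exactly what the paper means by "similar way." No gap.
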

Motivated by Theorem \ref{th}, we obtain the following explicit formula for computing the excess of component preserving dual fusion frames.
\begin{theorem}\label{excess Q dual}
Let $W=\{(W_{i},\omega_{i})\}_{i \in I}$ be a fusion frame for $\mathcal {H}$ with a $Q_{\mathcal{A}}$-component preserving dual fusion frame $V=\{(V_{i},\upsilon_{i})\}_{i \in I},$ where $\mathcal{A} \in \mathcal {L}_{T_{W}^{*}}$ and $\textnormal{dim} V_{i}<\infty$ for all $i \in I$. Then
\begin{equation*}
e(V)=\sum_{i\in I}\big(\textnormal{dim} V_{i}-\omega_{i}\textnormal{trace}\left( \mathcal{A}p_{i}^{*}\pi_{W_{i}}\right)\big) .
\end{equation*}
\end{theorem}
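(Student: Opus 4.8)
The plan is to mimic the proof of Theorem~\ref{th}, replacing the canonical right inverse of $T_W$ used there by the right inverse of $T_V$ that is built into $Q_{\mathcal{A}}$-duality. Since $\mathcal{A}\in\mathcal{L}_{T_W^*}$, expanding the definitions of $Q_{\mathcal{A}}$, $T_W^*$ and $T_V$ gives $T_V Q_{\mathcal{A}}T_W^*=\sum_{i\in I}\omega_i\mathcal{A}p_i^*\pi_{W_i}=\mathcal{A}T_W^*=I_{\mathcal{H}}$, so $Q_{\mathcal{A}}T_W^*$ is a bounded right inverse of $T_V$ and
\[
P:=I_{\bigoplus V_i}-Q_{\mathcal{A}}T_W^*T_V
\]
is a bounded idempotent on $\bigoplus_{i\in I}V_i$ with $R(P)=N(T_V)$. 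Consequently $e(V)=\textnormal{dim}N(T_V)=\textnormal{trace}(P)$, the trace of an idempotent being its rank ($P$ has finite rank exactly when $e(V)<\infty$; when $e(V)=\infty$ the claimed identity reads $\infty=\infty$).

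I would then fix, for each $i\in I$, an orthonormal basis $\{e_{i,j}\}_{j\in J_i}$ of $V_i$ and assemble the orthonormal basis $\{E_{i,j}\}_{i\in I,\,j\in J_i}$ of $\bigoplus_{i\in I}V_i$ as in \eqref{oonnbb}, so that $\textnormal{trace}(P)=\sum_{i\in I,\,j\in J_i}\langle E_{i,j},PE_{i,j}\rangle$. Computing a diagonal term is then a short unwinding of definitions: $T_V E_{i,j}=\upsilon_i e_{i,j}$, hence $T_W^*T_V E_{i,j}=\{\upsilon_i\omega_k\pi_{W_k}e_{i,j}\}_{k\in I}$, and applying $Q_{\mathcal{A}}$ and reading off the $i$-th coordinate lets the factor $\upsilon_i$ coming from $T_V$ cancel the $\upsilon_i^{-1}$ in the $i$-th component of $Q_{\mathcal{A}}$, leaving $(Q_{\mathcal{A}}T_W^*T_V E_{i,j})_i=\omega_i\mathcal{A}p_i^*\pi_{W_i}e_{i,j}$, a vector of $V_i$ by the (component preserving) block structure. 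Pairing with $e_{i,j}$ yields $\langle E_{i,j},PE_{i,j}\rangle=1-\omega_i\langle e_{i,j},\mathcal{A}p_i^*\pi_{W_i}e_{i,j}\rangle$.

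It then remains to sum over $j$ and over $i$. Since $\textnormal{dim}V_i<\infty$, the operator $\mathcal{A}p_i^*\pi_{W_i}$ on $\mathcal{H}$ is of finite rank with range inside $V_i$; completing $\{e_{i,j}\}_{j\in J_i}$ to an orthonormal basis of $\mathcal{H}$ by one of $V_i^\perp$, each $\psi\in V_i^\perp$ contributes $\langle\psi,\mathcal{A}p_i^*\pi_{W_i}\psi\rangle=0$, so $\sum_{j\in J_i}\langle e_{i,j},\mathcal{A}p_i^*\pi_{W_i}e_{i,j}\rangle=\textnormal{trace}(\mathcal{A}p_i^*\pi_{W_i})$, and summing over $i$ gives $e(V)=\sum_{i\in I}\big(\textnormal{dim}V_i-\omega_i\,\textnormal{trace}(\mathcal{A}p_i^*\pi_{W_i})\big)$. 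The only non-routine step is the very first one: unlike the self-adjoint projection in Theorem~\ref{th}, here $P$ is merely an oblique bounded idempotent, so one must invoke "trace equals rank" for idempotents to identify $\textnormal{trace}(P)$ with $\textnormal{dim}N(T_V)$; once that is granted, the rest is bookkeeping with the definitions of $T_V$, $T_W^*$, $Q_{\mathcal{A}}$ and $p_i^*$.
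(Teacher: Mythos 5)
Your proposal is correct and follows essentially the same route as the paper: form the idempotent $P=I_{\bigoplus V_i}-Q_{\mathcal{A}}T_W^*T_V$ onto $N(T_V)$, compute $\textnormal{trace}(P)$ against the orthonormal basis $\{U_{i,j}\}$ of $\bigoplus_{i\in I}V_i$, and unwind the definitions so the $\upsilon_i$ and $\upsilon_i^{-1}$ cancel, leaving the diagonal terms $1-\omega_i\langle u_{i,j},\mathcal{A}p_i^*\pi_{W_i}u_{i,j}\rangle$. Your explicit remark that $P$ is only an oblique idempotent, so one must invoke trace-equals-rank rather than the self-adjoint projection argument of Theorem~\ref{th}, is a point the paper passes over silently but does not change the argument.
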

\begin{proof}
Let $\{u_{i,j}\}_{j \in J_{i}}$ be an orthonormal basis for $V_{i}~(i\in I)$ and $\{U_{i,j}\}_{i\in I, j \in J_{i}}$ be the orthonormal basis defined as in \eqref{oonnbb} for $\bigoplus_{i \in I} V_{i}$. The operator $P$ of $\bigoplus_{i \in I} V_{i}$ onto $N(T_{V})$ defined by $P=I_{\bigoplus V_{i}}-Q_{\mathcal{A}}T_{W}^{*}T_{V}$ is idempotent. Therefore, we obtain
\begin{align*}
e(V)=\textnormal{dim}N(T_{V})&=\textnormal{trace}(P)
\\&=\sum_{i\in I,j\in J_{i}}\big\langle U_{i,j},PU_{i,j} \big\rangle
\\&=\sum_{i\in I,j\in J_{i}}\left( 1-\big\langle U_{i,j},Q_{\mathcal{A}}T_{W}^{*}T_{V}U_{i,j} \big\rangle \right) 
\\&=\sum_{i\in I,j\in J_{i}}\left( 1-\big\langle U_{i,j},Q_{\mathcal{A}}\{\omega_{k}\pi_{W_{k}}\upsilon_{i}u_{i,j}\}_{k\in I} \big\rangle \right) 
\\&=\sum_{i\in I,j\in J_{i}}\left( 1-\left\langle  U_{i,j},\big\{\upsilon_{i}^{-1}\mathcal{A}M_{i}\{\omega_{k}\pi_{W_{k}}\upsilon_{i}u_{i,j}\}_{k\in I}\big\}_{i \in I} \right\rangle  \right) 
\\&=\sum_{i\in I,j\in J_{i}}\left( 1-\big\langle u_{i,j},\mathcal{A}\{\delta_{k,i}\omega_{k}\pi_{W_{k}}u_{i,j}\}_{k\in I} \big\rangle \right) 
\\&=\sum_{i\in I,j\in J_{i}}\left( 1-\big\langle u_{i,j},\mathcal{A}p_{i}^{*}\omega_{i}\pi_{W_{i}}u_{i,j} \big\rangle \right) 
\\&=\sum_{i\in I}\bigg( \textnormal{dim} V_{i}-\omega_{i}\sum_{j\in J_{i}}\big\langle u_{i,j},\mathcal{A}p_{i}^{*}\pi_{W_{i}}u_{i,j} \big\rangle \bigg) 
\\&=\sum_{i\in I}\big( \textnormal{dim} V_{i}-\omega_{i}\textnormal{trace}(\mathcal{A}p_{i}^{*}\pi_{W_{i}})\big) .
\end{align*}
\end{proof}
In the sequel, we present a component preserving dual fusion frame of the fusion frame introduced in Example \ref{555}(2) and subsequently compute its excess by employing the previous theorem.
\begin{example}\label{example Q-dual}
Consider the 2-equi-dimensional fusion frame $W=\{(W_{i},\omega)\}_{i \in I}$ introduced in Example \ref{555}(2) with the fusion frame operator
\begin{equation*}
S_{W}=\textnormal{diag}\big(\omega ^{2}, 2\omega ^{2}, 2\omega ^{2},\ldots \big).
\end{equation*}
Notice that
\begin{equation*}
\bigoplus_{i \in I}W_{i}=\left\lbrace \bigg\{ \sum_{k=1}^{2}c_{j,k}e_{j+k-1}\bigg \} _{j \in I};~ \sum_{j \in I}\sum_{k=1}^2\vert c_{j,k}\vert ^2<\infty \right\rbrace .
\end{equation*}
Every $\mathcal{A} \in \mathcal {L}_{T_{W}^{*}}$ is given by $S_{W}^{-1}T_{W}+R,$ where $R \in B\left( \bigoplus_{i \in I} W_{i},\mathcal {H}\right) $ and $RT_{W}^{*}=0$. Hence, the matrix representation of $\mathcal{A}$ is of the form
\begin{equation*}
\mathcal{A}=
\begin{pmatrix}
\omega ^{-1}  &  r_{1,2}  &  r_{1,3}  &  r_{1,4}  &  r_{1,5} &  \ldots\\
 0   &  \frac{\omega ^{-1}}{2}+r_{2,2} & \frac{\omega ^{-1}}{2}+r_{2,3} &  r_{2,4}  &  r_{2,5}  &  \ldots\\
 0 & r_{3,2}  &  r_{3,3}  &  \frac{\omega ^{-1}}{2}+r_{3,4} & \frac{\omega ^{-1}}{2}+r_{3,5} &  \ldots\\
 0 & r_{4,2}  &  r_{4,3}  &  r_{4,4}  &  r_{4,5} &  \ddots \\
 \vdots & \vdots  &  \vdots  &  \vdots  &  \vdots &  \vdots \\
\end{pmatrix},
\end{equation*}
where $\omega r_{i,j}+\omega r_{i,j+1}=0$ for each $i \in \Bbb{N}$ and even number $j\in \Bbb{N}$. Consider $\mathcal{B}$ as follows.
\begin{equation*}
\mathcal{B}=
{\left(\begin{smallmatrix}
\omega ^{-1}  &  0  &  0  &  0  &  0 &  0  &  0 & \ldots\\
 0   &  \frac{\omega ^{-1}}{2}+r_{2,2} & \frac{\omega ^{-1}}{2}+r_{2,3} &  0  & 0  & 0  &  0 & \ldots\\
 0 & 0  &  0  &  \ddots & 0 &  0  &  0 & \ldots\\
 0 & 0  &  0  &  \frac{\omega ^{-1}}{2}+r_{n,2n-2}  &  \frac{\omega ^{-1}}{2}+r_{n,2n-1} & 0 &  0 &  \ldots \\
 0 & 0  &  0  &  0  &   0  &  \omega ^{-1} & 0 & \ldots \\
  0 & 0  &  0  &  0  &   0  &  0 & 0 & \scriptsize\ddots \\
 \scriptsize\vdots & \scriptsize\vdots  &  \scriptsize\vdots  &  \scriptsize\vdots  &  \scriptsize\vdots &  \scriptsize\vdots &  \scriptsize\vdots  \\
\end{smallmatrix}\right)},
\end{equation*}
where $\omega r_{i,2i-2}+\omega r_{i,2i-1}=0$ for $i=2, \ldots ,n$ and $r_{i,2i-2}=-r_{i,2i-1}=\frac{\omega^{-1}}{2}$ for all $i>n$. Then $\mathcal{B} \in \mathcal {L}_{T_{W}^{*}}$ and 
\begin{equation*}
V_{i}:=\mathcal{B}M_{i}\bigg( \bigoplus_{i \in I}W_{i}\bigg) =\begin{cases}
\textnormal{span}\{e_{i},e_{i+1}\},~& 1 \leq i \leq n,
\\
\textnormal{span}\{e_{i+1}\},~& i > n.
\end{cases}
\end{equation*}
So $V=\{(V_{i},\upsilon_{i})\}_{i \in I}$ constitutes a fusion frame for $\mathcal {H}$ and \hbox{$Q_{\mathcal{B}}: \bigoplus_{i \in I} W_{i}\rightarrow \bigoplus_{i \in I} V_{i},$}
\begin{equation*}
Q_{\mathcal{B}}\left\lbrace \sum_{k=1}^{2}c_{j,k}e_{j+k-1}\right\rbrace _{j \in I}=\left\lbrace \bigg\{\sum_{j=1}^{2}\upsilon_{i}^{-1}c_{i,j}e_{i+j-1}\bigg \}_{i=1}^n,\bigg \{\upsilon_{i} ^{-1}c_{i+1,i+1}e_{i+1}\bigg \}_{i=n+1}^{\infty}\right\rbrace ,
\end{equation*}
is a well-defined bounded operator. Hence, $V$ is a $Q_{\mathcal{B}}$-component preserving dual fusion frame of $W,$ by \cite[Lemma 3.5]{Heineken}. Now, we intend to calculate the excess of the fusion frame $V.$ Note that
\begin{equation*}
\sum_{i=n+1}^{\infty}\big(\textnormal{dim} V_{i}-\omega\textnormal{trace}(\mathcal{B}p_{i}^{*}\pi_{W_{i}})\big)=\sum_{i=n+1}^{\infty}\left( 1-\omega \omega ^{-1}\right) =0.
\end{equation*}
Therefore, by applying Theorem \ref{excess Q dual}, we derive
\begin{align*}
e(V)&=\sum_{i=1}^{n}\big(\textnormal{dim} V_{i}-\omega\textnormal{trace}(\mathcal{B}p_{i}^{*}\pi_{W_{i}})\big)
\\&=\left[ 2-\omega \left( \omega ^{-1}+\frac{\omega ^{-1}}{2}+r_{2,2}\right) \right] +\left[ 2-\omega \left( \omega ^{-1}+\frac{\omega ^{-1}}{2}+r_{n,2n-1}\right) \right] 
\\&+\sum_{i=2}^{n-1}\left[ 2-\omega\left( \omega ^{-1}+r_{i,2i-1}+r_{i+1,2i}\right) \right]
\\&=2n-\left[ (n+1)+\sum_{i=2}^{n}(\omega r_{i,2i-2}+\omega r_{i,2i-1})\right] =2n-(n+1)=n-1.
\end{align*}
\end{example}

The next corollary is an immediate result of Theorem \ref{excess Q dual}.
\begin{corollary}\label{gavruta excess dual}
Let $W=\{(W_{i},\omega_{i})\}_{i \in I}$ be a fusion frame for $\mathcal {H}$ with a G\~{a}vru\c{t}a dual fusion frame $V=\{(V_{i},\upsilon_{i})\}_{i \in I}$ such that $\textnormal{dim} V_{i}<\infty,$ for all $i \in I$. Then
\begin{equation*}
e(V)=\sum_{i\in I}\left( \textnormal{dim} V_{i}-\omega_{i}\upsilon_{i}\textnormal{trace}\left( \pi_{V_{i}}S_{W}^{-1}\pi_{W_{i}}\right) \right) .
\end{equation*}
\end{corollary}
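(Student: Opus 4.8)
The plan is to exhibit the G\~{a}vru\c{t}a dual $V$ as a $Q_{\mathcal{A}}$-dual fusion frame of $W$ for a concrete left inverse $\mathcal{A}$ of $T_{W}^{*}$, and then read off $e(V)$ from the trace formula of Theorem~\ref{excess Q dual}. The natural choice is $\mathcal{A}:=T_{V}\varphi_{VW}$. As already recorded in the proof of Corollary~\ref{Gavruta bound}, the identity $T_{V}\varphi_{VW}T_{W}^{*}=I_{\mathcal{H}}$ shows $\mathcal{A}\in\mathcal{L}_{T_{W}^{*}}$, and a direct computation gives $\mathcal{A}p_{i}^{*}\pi_{W_{i}}f=\upsilon_{i}\,\pi_{V_{i}}S_{W}^{-1}\pi_{W_{i}}f$ for every $f\in\mathcal{H}$ and $i\in I$.

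First I would check that the operator $Q_{\mathcal{A}}$ of Definition~\ref{Q} attached to this $\mathcal{A}$ is exactly $\varphi_{VW}$. Since $M_{i}\{f_{j}\}_{j\in I}=p_{i}^{*}f_{i}$ and $f_{i}\in W_{i}$, the formula above gives $\mathcal{A}M_{i}\{f_{j}\}_{j\in I}=\upsilon_{i}\pi_{V_{i}}S_{W}^{-1}f_{i}$, so that $Q_{\mathcal{A}}\{f_{j}\}_{j\in I}=\{\upsilon_{i}^{-1}\mathcal{A}M_{i}\{f_{j}\}_{j\in I}\}_{i\in I}=\{\pi_{V_{i}}S_{W}^{-1}f_{i}\}_{i\in I}=\varphi_{VW}\{f_{j}\}_{j\in I}$. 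In particular $\varphi_{VW}$ is block-diagonal, so $V$ is the $Q_{\mathcal{A}}$-dual fusion frame of $W$ to which the computation in the proof of Theorem~\ref{excess Q dual} applies, and that computation yields
\begin{equation*}
e(V)=\sum_{i\in I}\big(\textnormal{dim}\,V_{i}-\omega_{i}\,\textnormal{trace}(\mathcal{A}p_{i}^{*}\pi_{W_{i}})\big).
\end{equation*}
Substituting $\mathcal{A}p_{i}^{*}\pi_{W_{i}}=\upsilon_{i}\,\pi_{V_{i}}S_{W}^{-1}\pi_{W_{i}}$ and pulling the scalar $\upsilon_{i}$ out of each trace gives precisely $e(V)=\sum_{i\in I}\big(\textnormal{dim}\,V_{i}-\omega_{i}\upsilon_{i}\,\textnormal{trace}(\pi_{V_{i}}S_{W}^{-1}\pi_{W_{i}})\big)$.

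The only delicate point, and what I would flag as the main obstacle, is that Theorem~\ref{excess Q dual} is phrased for \emph{component preserving} duals, whereas a generic G\~{a}vru\c{t}a dual is merely block-diagonal (one may have $\upsilon_{i}\pi_{V_{i}}S_{W}^{-1}W_{i}\subsetneq V_{i}$). So I would make sure the argument of Theorem~\ref{excess Q dual} uses component preservation only through the inclusion $\mathcal{A}M_{i}\big(\bigoplus_{i\in I}W_{i}\big)\subseteq M_{i}\big(\bigoplus_{i\in I}V_{i}\big)$, that is, only the block-diagonal property, together with $\textnormal{dim}\,V_{i}<\infty$. Granting this, the finite-rank operator $\pi_{V_{i}}S_{W}^{-1}\pi_{W_{i}}$ has range inside the finite-dimensional subspace $V_{i}$, so its trace may be computed over an orthonormal basis of $V_{i}$; the idempotent $I_{\bigoplus V_{i}}-Q_{\mathcal{A}}T_{W}^{*}T_{V}$ still projects onto $N(T_{V})$; and the chain of equalities carrying $\textnormal{dim}\,N(T_{V})=\textnormal{trace}(I_{\bigoplus V_{i}}-Q_{\mathcal{A}}T_{W}^{*}T_{V})$ to the stated sum goes through verbatim with $\mathcal{A}=T_{V}\varphi_{VW}$. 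This is what makes the corollary an immediate consequence of the theorem.
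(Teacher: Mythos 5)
Your proof is correct and follows the same route as the paper, which simply declares the corollary an immediate consequence of Theorem~\ref{excess Q dual} via the identity $\mathcal{A}p_{i}^{*}\pi_{W_{i}}=\upsilon_{i}\pi_{V_{i}}S_{W}^{-1}\pi_{W_{i}}$ for $\mathcal{A}=T_{V}\varphi_{VW}$, exactly as computed in the proof of Corollary~\ref{Gavruta bound}. Your observation that Theorem~\ref{excess Q dual} is stated for component preserving duals while a G\~{a}vru\c{t}a dual is in general only block-diagonal --- and that its proof in fact only uses the block-diagonal form of $Q_{\mathcal{A}}$ --- is a genuine gap in the paper's exposition that you correctly identify and close.
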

Suppose that $W=\{(W_{i},\omega_{i})\}_{i \in I}$ is a fusion frame for $\mathcal {H}$. As already mentioned, every bounded left inverse of $T_{W}^{*}$ is the operator $\mathcal{A}$ of the form $\mathcal{A}=S_{W}^{-1}T_{W}+RP,$ where $R \in B\left( \bigoplus_{i \in I} W_{i},\mathcal {H}\right) $ and $P=I_{\bigoplus W_{i}}-T_{W}^{*}S_{W}^{-1}T_{W}$ is an orthogonal projection from $\bigoplus_{i \in I} W_{i}$ onto $N(T_{W})$. Hence, for every $Q_{\mathcal{A}}$-dual fusion frame $V=\{(V_{i},\omega_{i})\}_{i \in I}$ of $W$ such that $\textnormal{dim} V_{i}<\infty$ for all $i \in I$, it follows
\begin{align*}
e(V)&=\sum_{i\in I}\big(\textnormal{dim} V_{i}-\omega_{i}\textnormal{trace}(\mathcal{A}p_{i}^{*}\pi_{W_{i}})\big)
\\ &=\sum_{i\in I}\left( \textnormal{dim} V_{i}-\omega_{i}\textnormal{trace}\left( S_{W}^{-1}T_{W}p_{i}^{*}\pi_{W_{i}}+RPp_{i}^{*}\pi_{W_{i}}\right) \right) 
\\ &=\sum_{i\in I}\left( \textnormal{dim} V_{i}-\omega_{i}^{2}\textnormal{trace}\left( \pi_{W_{i}}S_{W}^{-1}\pi_{W_{i}}\right) -\omega_{i}\textnormal{trace}\left( RPp_{i}^{*}\pi_{W_{i}}\right) \right) 
\\ &=\sum_{i\in I}\big(\textnormal{dim} V_{i}-\textnormal{dim} W_{i}\big)+e(W)-\sum_{i\in I}\omega_{i}\textnormal{trace}\left( RPp_{i}^{*}\pi_{W_{i}}\right) .
\end{align*}

Here, we compute the excess of a G\~{a}vru\c{t}a dual fusion frame of a given fusion frame.
\begin{example}\label{exa last}
Consider the fusion frame $W=\{(W_{i},\omega)\}_{i=1}^\infty$ and its G\~{a}vru\c{t}a dual $V=\{(V_{i},\omega)\}_{i=1}^\infty$ introduced in Example \ref{poi}(3). As we have already seen $e(V)=nm-1.$ Now, we are going to calculate the excess of $V$ by Corollary \ref{gavruta excess dual}. To this end, note that
\begin{equation*}
\sum_{i=n+1}^{\infty}\left( \textnormal{dim} V_{i}-\omega ^{2}\textnormal{trace}\left( \pi_{V_{i}}S_{W}^{-1}\pi_{W_{i}}\right) \right) =\sum_{i=n+1}^{\infty}\big(1-\omega ^{2}\omega ^{-2}\big)=0.
\end{equation*}
Hence, we get
\begin{align*}
e(V)&=\sum_{i=1}^\infty\left( \textnormal{dim} V_{i}-\omega ^{2}\textnormal{trace}\left( \pi_{V_{i}}S_{W}^{-1}\pi_{W_{i}}\right) \right) 
\\&=\sum_{i=1}^{n}\left( \textnormal{dim} V_{i}-\omega ^{2}\textnormal{trace}\left( \pi_{V_{i}}S_{W}^{-1}\pi_{W_{i}}\right) \right) 
\\&=2\left[ (m+1)-\omega ^{2}\left( \omega ^{-2}+\frac{\omega ^{-2}}{2}\right) \right] +\sum_{i=2}^{n-1}\left[(m+1)-\omega ^{2}\omega ^{-2}\right] =nm-1.
\end{align*}
\end{example}

The significance of the above example lies in the fact that if $m=2$, then for every $n \in \Bbb{N}$, we can provide a fusion frame $W$ and a G\~{a}vru\c{t}a dual $V$ of $W$ such that $e(V)-e(W)=n$. However, this statement is not possible in the context of ordinary frames.

\bibliographystyle{amsplain}

% ------------------------------------------------------------------------
\end{document}